\documentclass[leqno]{article}
\usepackage{amsmath, amsthm, amssymb}
\usepackage[T1]{fontenc}

\def\R {\mathbb{R}}
\def\pr{\right )}
\def\le{\left (}
\def\d{\,\mathrm{d}}
\def\f{\varphi}


\usepackage{graphicx}
\usepackage{pst-plot}
\usepackage{pst-math}

\usepackage{authblk}

\author[1]{Tomasz Cie\'{s}lak\thanks{cieslak@impan.pl}}
\author[1]{Jakub Siemianowski\thanks{jsiem@mat.umk.pl}}
\author[2]{Andrzej \'{S}wi\k{e}ch\thanks{swiech@math.gatech.edu}}
\affil[1]{Institute of Mathematics, Polish Academy of Sciences, \'{S}niadeckich 8, Warsaw, Poland,}
\affil[2]{School of Mathematics, Georgia Institute of Technology, Atlanta, GA 30332,USA}

\newtheorem{proposition}{Proposition}[section]
\newtheorem{theorem}[proposition]{Theorem}
\newtheorem{corollary}[proposition]{Corollary}
\newtheorem{lemma}[proposition]{Lemma}
\theoremstyle{definition}
\newtheorem{definition}[proposition]{Definition}
\newtheorem{remark}[proposition]{Remark}

\numberwithin{equation}{section}

\title{Viscosity solutions to an initial value problem for a Hamilton--Jacobi equation with a degenerate Hamiltonian occurring in the dynamics of peakons}


\begin{document}

\maketitle
\begin{abstract}
We consider an initial value problem for a Hamilton--Jacobi equation with a quadratic and degenerate Hamiltonian.
Our Hamiltonian comes from the dynamics of $N$-peakon in the Camassa--Holm equation.
It is given by a quadratic form with a singular positive semi-definite matrix. Such a problem does not fall into the standard theory of viscosity solutions. Also viability related results, sometimes used to deal with degenerate Hamiltonians, do not seem applicable in our case.
We prove the global existence of a viscosity solution by looking at the associated optimal control problem and showing that the value function is a viscosity solution.
The most complicated part is the continuity of a viscosity solution which is obtained in the two-peakon case only.
The source of the difficulties is the non-uniqueness of solutions to the state equation in the optimal control problem.
We prove that the viscosity solution is Lipschitz continuous and unique on some short time interval if the initial condition is Lipschitz continuous.
We end the paper with an example showing the loss of Lipschitz continuity of a viscosity solution in the one-dimensional case.
\end{abstract}
\section{Introduction}\label{Intro}

We consider the initial value problem for the  Hamilton--Jacobi equation of the form
\begin{equation}\label{eq:1.1}
\begin{aligned}
\begin{cases}
u_t(x,t) + \frac{1}{2}E(x)\nabla u(x,t)\cdot \nabla u(x,t) = 0,&x\in \R^N, t\in (0,T)\\
u(x,0) = g(x), &x\in \R^N,
\end{cases}
\end{aligned}
\end{equation}
where $T>0$ is fixed, $p\cdot q$ denotes the inner product between $p$, $q\in \R^N$,
$\nabla u$ denotes the gradient of $u$ with respect to the $x$ variable and $g:\R^N \to \R$ is a given function which is bounded and continuous.
Finally, $E$ is the symmetric matrix
\begin{equation}\label{eq:1.43}
E(x):=
\le
\begin{matrix}
1 & e^{-|x_1-x_2|} & \ldots & e^{-|x_1 - x_N|}\\
e^{-|x_2-x_1|} & 1 & \ldots & e^{-|x_2 - x_N|}\\
\vdots & \vdots & \ddots  & \vdots \\
e^{-|x_N-x_1|} & e ^{-|x_N - x_2|} & \ldots & 1
\end{matrix}
\pr, x= (x_1,x_2, \ldots, x_N) \in \R^N.
\end{equation}
This problem arises in the study of multipeakons. The latter are particular solutions to the Camassa--Holm equation of a form $v(t,x)=\sum_{i=1}^N p_i(t)e^{-|x-q_i(t)|}$,
see \cite{CH}.
Multipeakons play a similar role in the studies of the Camassa--Holm equation to the one played by solitons in KdV equation. The necessary conditions which have to be satisfied by $p_i$ and $q_i$, so that a multipeakon is a solution of the Camassa--Holm equation, is that $p_i$ and $q_i$ solve the following Hamiltonian system, see \cite{K},
\begin{equation}\nonumber
\begin{aligned}
\begin{cases}
\dot{q_i}=\frac{\partial H}{\partial p_i}\\
\dot{p_i}=-\frac{\partial H}{\partial q_i},
\end{cases}
\end{aligned}
\end{equation}
where $p=(p_1,\ldots,p_N)$ and $q=(q_1,\ldots,q_N)$ are vectors, the quadratic Hamiltonian $H$ is given by $H(q,p)=1/2 E(q)p\cdot p$, and
the matrix $E(q)$ has entries $E_{ij}(q)=e^{-|q_i-q_j|}$, $i,j=1,\ldots,N$. Such a symmetric matrix is positive semi-definite, regardless of the dimension, see for instance \cite{CGKM}. Moreover, it is known that the matrix $E$ is positive definite, in particular invertible, if $q_i\neq q_j, i,j=1,\ldots,N, i\not=j$. In such cases an exact form of the inverse $E^{-1}$ is known, see \cite{CGKM}. The dynamics of multipeakons, in particular their collisions, as well as the problem of (unique) continuation past a collision time, has been an area of intensive studies. Let us mention for instance \cite{CH}, where multipeakons have been introduced, \cite{BSS} where the very detailed information concerning occurrence of collisions
in terms of initial data has been given. The methods in \cite{BSS} involve the inverse scattering. Holden and collaborators introduced some other methods to examine the dynamics of multipeakons, see for instance \cite{GH} for the detailed study of a two-peakon case. Finally, let us mention that multipeakons obey the Hamiltonian dynamics (at least as long as the peaks of the multipeakon located at points $q_1(t),\ldots,q_n(t)$ do not collide, so that a Hamiltonian is regular enough). This allows the use of differential geometry methods to study the dynamics of multipeakons, see \cite{CGKM}, \cite{K}. The present paper is a first step in a slightly different direction. The Hamilton--Jacobi equation describes the evolution of the wave propagation front
of the trajectories of a Hamiltonian system. It is our goal to study multipeakons via such an approach. Moreover, Hamilton--Jacobi-like systems occur when dealing with optimal control problems related to multipeakons.
Equation \eqref{eq:1.1} is a Hamilton--Jacobi equation related to the Hamiltonian dynamics of multipeakons, with $E_{ij}=e^{-|q_i-q_j|}$.
The one-dimensional simplifications of \eqref{eq:1.1} are studied in \cite{CW}. However, as noticed in \cite{CW}, the methods used there are peculiar to the one-dimensional setting. The higher-dimensional case requires more advanced approach.

In the present paper we address the question of the existence of viscosity solutions to \eqref{eq:1.1}. In the problem we consider, the matrices $E(x)$ are degenerate whenever $x_i=x_j$. Also, computing the Lagrangian related to the Hamiltonian $H$, we arrive at a singular one, blowing up at the points corresponding to the line $q_i=q_j$. Thus neither classical nor standard  viscosity solution theory, see, e.g., \cite{BCD,CS,CIL,Evans,FS}, is applicable. We have to extend the methods using some tricks, which will lead us to the consideration of the associated optimal control problem having state equation with nonunique solutions. This will require some delicate and careful analysis.

We do not know if equation \eqref{eq:1.1} has a global in time unique viscosity solution. Uniqueness is typically a consequence of the comparison principle
which guarantees that a viscosity subsolution stays below a viscosity supersolution (see \cite{BCD, CIL, FS}). It is not difficult to see that a suitable modification of a standard proof gives comparison principle for equation \eqref{eq:1.1} in any dimension for bounded viscosity subsolutions and supersolutions which are $\alpha$-H\"older continuous in the $x$-variable on every set $\R^n\times(0,T_1), 0<T_1<T$, for some $\alpha>\frac{1}{2}$.
However, it is not expected that viscosity solutions to \eqref{eq:1.1} admit such a high regularity.
Indeed, in the last section we give an explicit formula for a viscosity solution to a one-dimensional simplification of \eqref{eq:1.1}
which is exactly $1/2$-H\"{o}lder continuous in the space variable.
Comparison principle would also work for bounded viscosity subsolutions and supersolutions if the matrices $E(x)$ were positive definite for every $x$.
More sophisticated results about comparison principles for more general equations containing \eqref{eq:1.1} as a model equation can be found in \cite {CDL,DLL1,DLL2}.
Since $\sqrt{E(x)}$ is only $1/2$-H\"older continuous here (see below) and may be degenerate, none of these results applies to our case.
Nevertheless, in the last section we prove local in time uniqueness of viscosity solutions for a slightly more general class of Hamilton--Jacobi problems, see Theorem \ref{thm.uniq:1}.
It turns out that as long as the viscosity solution is Lipschitz continuous it is unique (as mentioned above).
We also show that the viscosity solution of such a more general problem, starting from a Lipschitz initial condition loses Lipschitz continuity and becomes  exactly $1/2$-H\"older continuous at some positive time.

Let us finally mention that our problem is degenerate enough that it does not obey the viability methods, designed to study Hamilton--Jacobi equations exactly when Lagrangians are infinite, see for instance \cite{FPR} or \cite{galbraith} (the latter seems to be the reference covering the most general part of a theory). A straightforward computation shows that an assumption \cite[(A2)]{galbraith} is not satisfied here. Moreover, viability approach does not seem to be extendable to our case without essentially new steps.

In this paper we show in any dimension $N\geq 1$ that the value function of the associated optimal control problem is lower semi-continuous and is a discontinuous viscosity solution of \eqref{eq:1.1} (Theorem \ref{thm:1.2}).
In the two-dimensional case, however, we are able to show that  the value function is continuous despite the degeneracy of $E(x)$.
This can be done thanks to the particular form of $E(x)$.

\subsection{Transformation}\label{subsection:transformation}

The matrix $E(x)$, for $x\in \R^N$, is symmetric and positive semi-definite, so it has the unique square root $\sqrt{E(x)}$ in the class of 
symmetric positive semi-definite matrices.

Let $A$, $B$ be symmetric positive semi-definite $N\times N$ matrices.
By Theorem X.1.1 in \cite{Bhatia} (see also Theorem V.1.9 in \cite{Bhatia}), we have
\begin{equation}\label{eq:1.45}
\|\sqrt{A} - \sqrt{B}\| \leq \sqrt{\|A - B\|},
\end{equation}
where $\|A\|$ denotes the operator norm of the matrix $A$.
By \eqref{eq:1.43}, there is $L_0>0$ such that
\[
\|E(x)-E(y)\|\leq L_0\|x-y\|, \qquad x,\,y\in \R^N.
\]
Combining the above inequalities implies that there is $L_1>0$ such that
\begin{equation}\label{eq:1.44}
\|\sqrt{E(x)} -\sqrt{E(y)}\|\leq L_1\sqrt{\|x-y\|},\qquad x,\,y\in \R^N,
\end{equation}
what shows that the map $x\mapsto \sqrt{E(x)}$ is $1/2$-H\"{o}lder continuous.
In dimension $N=2$, we have the exact formula for $\sqrt{E(x)}$, for $x\in \R^2$,
\begin{eqnarray}\label{eq:1.39}
&&\sqrt{E(x)}:= \\
&&\frac{1}{2}
\le
\begin{matrix}
\sqrt{1 + e^{-|x_1-x_2|}} + \sqrt{1-e^{-|x_1 -x_2|}}  &  \sqrt{1 + e^{-|x_1-x_2|}} - \sqrt{1-e^{-|x_1 -x_2|}}\\
\sqrt{1 + e^{-|x_1-x_2|}} - \sqrt{1-e^{-|x_1 -x_2|}}  &  \sqrt{1 + e^{-|x_1-x_2|}} + \sqrt{1-e^{-|x_1 -x_2|}}
\end{matrix}
\pr ,\nonumber
\end{eqnarray}
which is of particular importance.

Since the matrix $E(x)$ has bounded coefficients (see \eqref{eq:1.43}), there is $C_0>0$ such that $\|E(x)\|\leq C_0$.
We substitute $A = E(x)$ and $B=0$ in \eqref{eq:1.45} to obtain
\begin{equation}\label{eq:1.46}
\|\sqrt{E(x)}\|\leq C_1.
\end{equation}

We have
\[
E(x)v \cdot v = \sqrt{E(x)}v\cdot \sqrt{E(x)}v = \left | \sqrt{E(x)}v\right|^2, \qquad \text{for }x,\,v\in \R^N.
\]
This allows us to reformulate the problem \eqref{eq:1.1} with the use of the Legendre--Fenchel transform. Namely, we have
\begin{equation}\label{eq:1.11}
\frac{1}{2}\left | \sqrt{E(x)} v \right |^2  =\sup_{a\in\R^N} \left \{a \cdot \sqrt{E(x)}v - \frac{1}{2}|a|^2\right \}
=\max_{a\in\R^N} \left \{\sqrt{E(x)}a \cdot v - \frac{1}{2}|a|^2\right\},
\end{equation}
where the last equality is due to the symmetry of $\sqrt{E(x)}$ and the fact that the supremum is attained.
By the above, we rewrite \eqref{eq:1.1} in the form
\begin{equation}\label{eq:1.7}
\begin{cases}
u_t(x,t) + \max_{a\in \R^N}\left\{\sqrt{E(x)}a \cdot \nabla u(x,t) -\frac{1}{2}|a|^2 \right\}= 0,&x\in \R^N,t\in (0,T)\\
u(x,0) = g(x),&x\in \R^N.
\end{cases}
\end{equation}
It is well-known that the terminal value problem equivalent to the above initial value problem is
\[
\begin{aligned}
\begin{cases}
u_t(x,t) - \max_{a\in \R^N} \left\{ \sqrt{E(x)}a \cdot \nabla w (x,t)- \frac{1}{2}|a|^2\right \} = 0,&\text{for }x\in \R^N, \; t\in (0,T),\\
u(x,T) = g(x),&\text{for }x\in\R^N,
\end{cases}
\end{aligned}
\]
or, after a simple modification,
\begin{equation}\label{eq:1.2}
\begin{aligned}
\begin{cases}
u_t(x,t) + \min_{a\in \R^N} \left\{ \sqrt{E(x)}a \cdot \nabla w (x,t)+ \frac{1}{2}|a|^2\right \} = 0,&\text{for }x\in \R^N, \; t\in (0,T),\\
u(x,T) = g(x),&\text{for }x\in\R^N.
\end{cases}
\end{aligned}
\end{equation}
The above equivalence is understood in the sense that $u$ is a viscosity solution of \eqref{eq:1.7} if and only if $w(x,t) := u(x,T-t)$ is a viscosity solution of \eqref{eq:1.2}, see, e.g., Remark (iii) below Thm 2 in \S 10.3 \cite{Evans}.
We use the form \eqref{eq:1.2} or \eqref{eq:1.1} of the Hamilton--Jacobi problem depending on whichever is more convenient for us.

Such a formulation is an initial step of our procedure. We will study the optimal control problem associated to \eqref{eq:1.2} and its value function
and show that it is a viscosity solution of \eqref{eq:1.2}. The problem is that $\sqrt{E(x)}$ is not Lipschitz continuous so the solutions of the state equations of our optimal control problem are not unique
and we do not have uniform continuous dependence estimates with respect to the initial conditions for them. Thus the most difficult part is in proving the continuity of the value function. The full continuity will be achieved only in the dimension $N=2$ in Section \ref{sec:cont} using the special structure of our problem and some
new ad hoc ideas. In dimensions $N\geq 3$ only lower semi-continuity of the value function is shown, see Theorem \ref{thm:1.2}. Once the continuity is established, Section \ref{sec:HJ} will follow standard approach. We will prove the dynamic programming principle and
use it to show that the value function is a viscosity solution of \eqref{eq:1.2}. 
Even though the material of Section \ref{sec:HJ} follows well known arguments which require only minor modifications here, we include full proofs of all results there to make the paper self-contained and easily accessible to readers who are not experts in the dynamic programming approach and the theory of viscosity solutions. In Section \ref{sec:uniq} we will prove a short time uniqueness result for Lipschitz continuous solutions
of the HJB equation \eqref{eq:1.2}.

We recall the definition of viscosity solution of \eqref{eq:1.2}. We refer the readers to \cite{BCD,CS,CIL,Evans,FS} for the basic theory of viscosity
solutions, Hamilton--Jacobi--Bellman equations and their connections to optimal control and calculus of variations problems.

For a function $v:\R^N\times(0,T]\to \R$ we denote by $v^*$ and $v_*$ its upper and lower semi-continuous envelopes, that is
\[
v^*(x,t)=\limsup_{(y,s)\to(x,t)}v(y,s),\quad v_*(x,t)=\liminf_{(y,s)\to(x,t)}v(y,s).
\]
\begin{definition}\label{def:1}
We say that a bounded and upper semi-continuous function $v:\R^N\times(0,T]\to \R$ is a \emph{viscosity subsolution} of the terminal value problem \eqref{eq:1.2} if $v(x,T) \leq g(x)$ for $x\in \R^N$
and for every $\f \in C^1(\R^N\times (0,T))$
\begin{equation}\label{eq:1.37}
\begin{cases}
\text{ if $v-\f$ has a local maximum at $(y_0,t_0)\in \R^N\times (0,T)$, then}\\
\quad\f_t (y_0,t_0) + \min_{a\in \R^N}\left\{ \sqrt{E(y_0)}a\cdot \nabla \f(y_0,t_0) + \frac{1}{2}|a|^2\right\} \geq  0.
\end{cases}
\end{equation}
We say that a bounded and lower semi-continuous function $v:\R^N\times(0,T]\to \R$ is a \emph{viscosity supersolution} of the terminal value problem \eqref{eq:1.2} if $v(x,T) \geq g(x)$ for $x\in \R^N$
and for every $\f \in C^1(\R^N\times (0,T))$
\begin{equation}\label{eq:1.38}
\begin{cases}
\text{ if $v-\f$ has a local minimum at $(y_0,t_0)\in \R^N\times (0,T)$, then}\\
\quad\f_t (y_0,t_0) + \min_{a\in \R^N}\left\{ \sqrt{E(y_0)}a\cdot \nabla \f(y_0,t_0) + \frac{1}{2}|a|^2\right\} \leq  0.
\end{cases}
\end{equation}
A function $v$ is a \emph{viscosity solution} of \eqref{eq:1.2} if it is a viscosity subsolution and a viscosity supersolution of \eqref{eq:1.2}.

\noindent
A function $v$ is a \emph{discontinuous viscosity solution} of \eqref{eq:1.2} if $v^*$ is a viscosity subsolution of \eqref{eq:1.2} and $v_*$ is a viscosity supersolution of \eqref{eq:1.2}.

\end{definition}

\section{Value function}
In this section we introduce the optimal control problem associated to \eqref{eq:1.2} and its value function, which is a candidate for a solution to \eqref{eq:1.2}. We also collect some technical lemmas which are needed in the rest of the paper.

Fix $0\leq t_0\leq T$ and $y_0\in \R^N$.
The value function $v:\R^N\times [0,T]\to \R$ is defined as the following Bolza problem:
\begin{equation}\label{eq:1.4}
v(y_0,t_0) := \inf\left \{\frac{1}{2}\int_{t_0}^T|\alpha(t)|^2\d t + g(x(T)) \right \},
\end{equation}
where the infimum is taken over all measurable controls $\alpha:[0,T]\to \R^N$ and all solutions to the state equation
\[
\tag{$P(\alpha,y_0,t_0)$}
\begin{cases}
\dot{x}(t) = \sqrt{E(x(t))}\alpha(t),&\text{for } t_0 < t < T,\\
x(t_0) = y_0.
\label{eq:1.3}
\end{cases}
\]
Bear in mind that $\sqrt{E(x)}$ is only $1/2$-H\"{o}lder continuous, see \eqref{eq:1.43} and \eqref{eq:1.39}. Hence, in general, solutions $x(t)$ are not unique due to the lack of the Lipschitz condition. Indeed, one easily checks that solutions to ($P(\alpha,y_0,t_0)$) lack uniqueness if $y_0$ belongs to some hyperplane $\{y_i=y_j\}$. Compare with (4) in the remark below.
\begin{remark}\label{rem:1.2}
(1) Note that the infimum in \eqref{eq:1.4} is always finite, since for example $\alpha\equiv 0$ is an admissible control.\\
\indent (2) The infimum in \eqref{eq:1.4} does not change if we restrict ourselves to controls $\alpha\in L^2((0,T),\R^N)$.\\
\indent (3) In view of (2) above, we can only consider controls $\alpha \in  L^2 \le (0,T),\R ^N\pr $.
Hence using the embedding $L^2((0,T),\R^N)\subset L^1((0,T),\R^N)$, for any $y_0$ and $t_0$, there is  an absolutely continuous function $x:[t_0,T]\to \R^N$ which solves \eqref{eq:1.3}, see Theorem XVIII on page 121 of \cite{Walter}.\\
\indent (4) Note that in the case of Lipschitz continuous right-hand side of \eqref{eq:1.3} the above formulation of the optimal control problem becomes the usual one, see \S 10.3.2 of \cite{Evans}.
\end{remark}

We need the following technical lemma. We denote by
$B_{L^2((t,T),\R^N)}(r)$ the closed ball in $L^2((t,T),\R^N)$ centered at $0$ with radius $r>0$.
\begin{lemma}\label{lem:1.5}
Let $y_n\to y_0$ in $\R^N$ and $t_n\to t_0$ in $[0,T]$.
We assume that the sequence of controls $(\alpha_n)$ is bounded, in the sense that $\sup_{n\geq 1}\|\alpha_n\|_{L^2((t_n,T),\R^N)} \leq r < \infty$, and  $x_n$ is any solution of \emph{($P(\alpha_n,y_n,t_n)$)}, $n\geq 1$.
We define $\underline{t} := \inf_{n\geq 0}\{t_n\}$ (note that $\underline{t}\neq t_0$ only if there exist $n$ such that $t_n<t_0$) and
\[
\begin{aligned}
\tilde{\alpha}_n(t)&:=
\begin{cases}
0&\text{for }t\in[\underline{t},t_n),\\
\alpha_n(t)&\text{for }t\in[t_n,T],
\end{cases}\\
\tilde{x}_n(t)&:=
\begin{cases}
y_n&\text{for }t\in[\underline{t},t_n),\\
x_n(t)&\text{for }t\in[t_n,T].
\end{cases}
\end{aligned}
\]
Then there are $\alpha_0$ and $x_0$ such that (up to a subsequence)
\begin{align*}
\tilde{\alpha}_n&\rightharpoonup \alpha_0 \quad\text{ weakly in }L^2((\underline{t},T),\R^N),\\
\tilde{x}_n&\to x_0 \quad \text{ in }C([\underline{t} ,T],\R^N),
\end{align*}
where $\|\alpha_0\|_{L^2((\underline{t},T),\R^N)}\leq r$ and $x_0$ is a solution  of \emph{($P(\alpha_0,y_0,t_0)$)}.
\end{lemma}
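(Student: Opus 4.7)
The plan is to combine the Banach--Alaoglu theorem for the controls with the Arzelà--Ascoli theorem for the trajectories, and then pass to the limit in the integral form of the state equation via the classical strong/weak product convergence trick. Since $L^2((\underline{t},T),\R^N)$ is reflexive and $(\tilde{\alpha}_n)$ is bounded by $r$, Banach--Alaoglu yields a subsequence (still indexed by $n$) with $\tilde{\alpha}_n \rightharpoonup \alpha_0$ weakly, and the weak lower semi-continuity of the norm gives $\|\alpha_0\|_{L^2}\leq r$. By construction $\tilde{\alpha}_n\equiv 0$ on $[\underline{t},t_n)$ while $\tilde{x}_n\equiv y_n$ there, so the extended trajectory satisfies
\[
\tilde{x}_n(t) = y_n + \int_{\underline{t}}^{t}\sqrt{E(\tilde{x}_n(\tau))}\,\tilde{\alpha}_n(\tau)\,\d\tau, \qquad t\in[\underline{t},T].
\]
Applying \eqref{eq:1.46} and the Cauchy--Schwarz inequality gives $|\tilde{x}_n(t)-\tilde{x}_n(s)|\leq C_1 r\sqrt{t-s}$ for all $\underline{t}\leq s\leq t\leq T$, so $(\tilde{x}_n)$ is uniformly $1/2$-Hölder equicontinuous and uniformly bounded. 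Arzelà--Ascoli then provides, after a further subsequence extraction, $\tilde{x}_n\to x_0$ in $C([\underline{t},T],\R^N)$.

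To pass to the limit in the integral equation I use the decomposition
\[
\sqrt{E(\tilde{x}_n(\tau))}\,\tilde{\alpha}_n(\tau) = \bigl[\sqrt{E(\tilde{x}_n(\tau))}-\sqrt{E(x_0(\tau))}\bigr]\tilde{\alpha}_n(\tau) + \sqrt{E(x_0(\tau))}\,\tilde{\alpha}_n(\tau).
\]
By \eqref{eq:1.44} and the uniform convergence of $\tilde{x}_n$, the first summand is bounded in $L^1$-norm by $\|\sqrt{E(\tilde{x}_n)}-\sqrt{E(x_0)}\|_{\infty}\,\|\tilde{\alpha}_n\|_{L^2}\sqrt{T-\underline{t}}\to 0$. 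The second summand, viewed as the image of $\tilde{\alpha}_n$ under the bounded multiplication operator $g\mapsto \sqrt{E(x_0(\cdot))}g$ on $L^2((\underline{t},T),\R^N)$, converges weakly in $L^2$ to $\sqrt{E(x_0)}\alpha_0$; testing against the $L^2$-function $\mathbf{1}_{[\underline{t},t]}$ then yields convergence of the indefinite integrals. Combining the two parts,
\[
x_0(t) = y_0 + \int_{\underline{t}}^{t}\sqrt{E(x_0(\tau))}\,\alpha_0(\tau)\,\d\tau, \qquad t\in[\underline{t},T].
\]

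Finally, since $\tilde{x}_n(t_n)=y_n\to y_0$, $t_n\to t_0$, and the continuous functions $\tilde{x}_n$ converge uniformly to $x_0$, we deduce $x_0(t_0)=y_0$. Subtracting the above identity at $t=t_0$ from the same identity at a generic $t\in[t_0,T]$ gives
\[
x_0(t) = y_0 + \int_{t_0}^{t}\sqrt{E(x_0(\tau))}\,\alpha_0(\tau)\,\d\tau,
\]
so $x_0$ is absolutely continuous on $[t_0,T]$, $x_0(t_0)=y_0$, and $\dot{x}_0=\sqrt{E(x_0)}\alpha_0$ almost everywhere on $(t_0,T)$, i.e.\ $x_0$ solves $(P(\alpha_0,y_0,t_0))$. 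The main obstacle throughout is the passage to the limit in the nonlinear product $\sqrt{E(\tilde{x}_n)}\tilde{\alpha}_n$, where $\sqrt{E(\cdot)}$ is only $1/2$-Hölder continuous and the controls converge merely weakly. The splitting above succeeds because \eqref{eq:1.44} upgrades the uniform convergence of $\tilde{x}_n$ into $L^\infty$-convergence of $\sqrt{E(\tilde{x}_n)}$, which is strong enough to tame the product against a weakly convergent sequence of controls.
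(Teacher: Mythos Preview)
Your proof is correct and follows essentially the same approach as the paper: weak compactness for the controls, Arzel\`a--Ascoli for the trajectories via the $1/2$-H\"older equicontinuity estimate, and passage to the limit in the integral equation using that $\sqrt{E(\tilde{x}_n)}\to\sqrt{E(x_0)}$ uniformly together with $\tilde{\alpha}_n\rightharpoonup\alpha_0$. Your explicit splitting of the product $\sqrt{E(\tilde{x}_n)}\tilde{\alpha}_n$ into a strongly vanishing part and a weakly convergent part is precisely the argument implicit in the paper's one-line passage to the limit.
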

\begin{proof}
Certainly, $\tilde{\alpha}_n\in B_{L^2((\underline{t},T),\R^N)}(r)$, and so (up to a subsequence) we may assume that
\begin{equation}\label{eq:1.50}
\tilde{\alpha}_n\rightharpoonup \alpha_0\quad\text{weakly in }L^2((\underline{t},T),\R^N).
\end{equation}
Observe that $\tilde{x}_n$ is a solution of ($P(\tilde{\alpha}_n,y_n,\underline{t})$), i.e.,
\[
\tilde{x}_n (t)= y_n + \int_{\underline{t}}^t \sqrt{E(\tilde{x}_n(s))}\tilde{\alpha}_n(s)\d s,\quad t\in[\underline{t},T],\; n\geq 1.
\]
Using \eqref{eq:1.46}, for every $n\geq 1$,
\[
\sup_{t\in[\underline{t},T]}|x_n(t)|\leq |y_n| + C_1\int_{\underline{t}}^T|\tilde{\alpha}_n(s)|\d s\leq \sup_{n\geq 1} |y_n| +C_1r\sqrt{T-\underline{t}},
\]
so $\{x_n\}$ is bounded in $C([\underline{t},T],\R^N)$.
For any $n\geq 1$ and $\underline{t}\leq t\leq t^\prime\leq T$, we have
\[
|\tilde{x}_n(t^\prime) - \tilde{x}_n(t)| \leq C_1\int_t^{t^\prime} |\tilde{\alpha}_n(s)|\d s\leq C_1r\sqrt{t^\prime - t},
\]
so $\{\tilde{x}_n\}$ is uniformly equicontinuous.
By the Arzela--Ascoli theorem, $\{\tilde{x}_n\}$ is relatively compact in $C([\underline{t},T],\R^N)$.
Passing to a subsequence, we may assume that $\tilde{x}_n\to x_0$ in $C([\underline{t},T],\R^N)$.
Therefore and by \eqref{eq:1.44}, $\sqrt{E(\tilde{x}_n)}\to\sqrt{E(x)}$ in $C([\underline{t},T],\R^{N\times N})$.
Hence and by \eqref{eq:1.50}, for $t\in [\underline{t},T]$, we have
\[
x_0(t)\leftarrow \tilde{x}_n(t) = y_n + \int_{\underline{t}}^t\sqrt{E(\tilde{x}_n(s))}\tilde{\alpha}_n(s)\d s\to y_0 + \int_{\underline{t}}^t\sqrt{E(x_0(s))}\alpha_0(s)\d s.
\]
This shows that $x_0$ is a solution of ($P({\alpha_0,y_0,\underline{t}})$).
On the other hand, since $x_n(t_n) = y_n \to y_0$ and $t_n\to t_0$, we obtain
\[
x_0(t_0) = y_0,
\]
so $x_0$ is also a solution of ($P({\alpha_0,y_0,t_0})$).
\end{proof}
The next lemma guarantees that the set over which we minimize in \eqref{eq:1.4} can be restricted and a minimizer exists.
\begin{lemma}\label{lem:1.3}
For every $(y_0,t_0)\in \R^N\times [0,T]$ there is a control $\alpha_0$ and a solution $x_0$ of \emph{($P(\alpha_0,y_0,t_0)$)} such that $\|\alpha_0\|_{L^2((t_0,T),\R^N)}\leq 2\sqrt{\|g\|_{L^\infty}}$ and
\[
v(y_0,t_0) = \frac{1}{2}\int_{t_0}^{T} |\alpha_0(t)|^2\d t + g(x_0(T)).
\]
In other words, for all $y_0\in\R^2$ and $t_0 \in [0,T]$, the following equality holds:
\[
v(y_0,t_0)= \min\left\{\frac{1}{2}\int_{t_0}^T|\alpha(t)|^2 \d t + g(x(T)) \right\},
\]
where the minimium is taken over all controls $\alpha \in B_{L^2((t_0,T),\R^N)}(2\sqrt{\|g\|_{L^\infty}})$ and all solutions $x$ of \emph{($P(\alpha,y_0,t_0)$)}.
Moreover, the value function $v$ is bounded.
\end{lemma}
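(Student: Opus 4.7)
The plan is to combine a straightforward a priori bound on minimizing sequences with the compactness provided by Lemma \ref{lem:1.5}. The boundedness of $v$ should come essentially for free, and then the main work is identifying a minimizer as a weak/uniform limit of an appropriate minimizing sequence.

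First I would show the two-sided bound $-\|g\|_{L^\infty} \leq v(y_0, t_0) \leq \|g\|_{L^\infty}$. The lower bound is immediate since $\tfrac12 \int |\alpha|^2 \geq 0$ and $g \geq -\|g\|_{L^\infty}$. For the upper bound, the constant control $\alpha \equiv 0$ is admissible; then any solution of $P(0,y_0,t_0)$ stays at $y_0$ (the state equation gives $\dot x = 0$), so the cost equals $g(y_0) \leq \|g\|_{L^\infty}$. This already proves that $v$ is bounded.

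Next I would extract the $L^2$ bound on near-optimal controls. Pick a minimizing sequence $(\alpha_n, x_n)$ with cost converging to $v(y_0, t_0)$. Since $g(x_n(T)) \geq -\|g\|_{L^\infty}$, rearranging the cost gives
\[
\tfrac12 \|\alpha_n\|_{L^2((t_0,T),\R^N)}^2 \leq \mathrm{cost}(\alpha_n,x_n) + \|g\|_{L^\infty},
\]
and for $n$ large the right-hand side is at most $v(y_0,t_0) + \|g\|_{L^\infty} + o(1) \leq 2\|g\|_{L^\infty} + o(1)$. Hence $\|\alpha_n\|_{L^2} \leq 2\sqrt{\|g\|_{L^\infty}} + o(1)$, and in particular the sequence is bounded in $L^2$ by a radius approaching $2\sqrt{\|g\|_{L^\infty}}$.

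Now I would apply Lemma \ref{lem:1.5} with $t_n \equiv t_0$ and $y_n \equiv y_0$ to obtain (along a subsequence) a weak $L^2$ limit $\alpha_0$ and a uniform limit $x_0 \in C([t_0,T],\R^N)$, with $x_0$ a solution of $P(\alpha_0, y_0, t_0)$. Weak lower semi-continuity of the $L^2$-norm yields
\[
\|\alpha_0\|_{L^2((t_0,T),\R^N)} \leq \liminf_{n\to\infty} \|\alpha_n\|_{L^2((t_0,T),\R^N)} \leq 2\sqrt{\|g\|_{L^\infty}}.
\]
Finally I would verify that $(\alpha_0, x_0)$ is a minimizer: using weak lower semi-continuity of $\tfrac12 \|\cdot\|_{L^2}^2$ together with continuity of $g$ and the uniform convergence $x_n(T) \to x_0(T)$,
\[
\tfrac12 \|\alpha_0\|_{L^2}^2 + g(x_0(T)) \leq \liminf_{n\to\infty}\Bigl(\tfrac12 \|\alpha_n\|_{L^2}^2 + g(x_n(T))\Bigr) = v(y_0,t_0),
\]
and the reverse inequality holds by the very definition of $v$ since $(\alpha_0, x_0)$ is admissible. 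The only mildly delicate point in the whole argument is the correct choice of common starting time in the invocation of Lemma \ref{lem:1.5} so that the weak-$L^2$ limit and the uniform limit of the trajectories are paired into a genuine solution of the state equation; everything else is bookkeeping.
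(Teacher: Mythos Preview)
Your proposal is correct and follows essentially the same argument as the paper: bound $v$ via the zero control, take a minimizing sequence, extract a uniform $L^2$ bound on the controls, apply Lemma~\ref{lem:1.5} with the constant sequences $y_n\equiv y_0$, $t_n\equiv t_0$, and conclude via weak lower semi-continuity of the $L^2$-norm together with $x_n(T)\to x_0(T)$. The only cosmetic difference is that the paper first applies Lemma~\ref{lem:1.5} with the cruder radius $r=\sqrt{2+4\|g\|_{L^\infty}}$ and then sharpens to $2\sqrt{\|g\|_{L^\infty}}$ in the limit, whereas you phrase the bound as $2\sqrt{\|g\|_{L^\infty}}+o(1)$ from the outset; the content is identical.
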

\begin{proof}
Fix $y_0\in \R^N$ and $t_0\in[0,T]$. Obviously we have $-\|g\|_{L^\infty}\leq v(y_0,t_0)$ and if
we use the control $\alpha \equiv 0$ in \eqref{eq:1.4}, we get
\begin{equation}\label{eq:1.29}
v(y_0,t_0) \leq g(y_0)\leq \|g\|_{L^\infty},
\end{equation}
so $|v(y_0,t_0)|\leq \|g\|_{L^\infty}$.

For every $n\geq 1$, there is $\alpha_n\in L^2((t_0,T),\R^2)$ and a solution $x_n$  of ($P(\alpha_n,y_0,t_0)$) such that
\begin{equation}\label{eq:1.31}
v(y_0,t_0) + \frac{1}{n}\geq \frac{1}{2}\int_{t_0}^T|\alpha_n(t)|^2 \d t+ g(x_n(T)).
\end{equation}
Therefore, by \eqref{eq:1.29}, we obtain
\begin{equation}\label{eq:1.40}
\frac{1}{2}\int_{t_0}^T|\alpha_n(t)|^2 \d t \leq \frac{1}{n} + 2\|g\|_{L^\infty}, \quad n\geq 1,
\end{equation}
and so, $\alpha_n\in B_{L^2((t_0,T),\R^N)}(r)$, where $r:=\sqrt{2+4\|g\|_{L^\infty}}$, $n\geq 1$.
Taking constant sequences $y_n=y_0$, $t_n=t_0$, $n\geq 1$, and using Lemma \ref{lem:1.5}, there is $\alpha_0\in L^2((t_0,T),\R^N)$ and a solution $x_0$ of ($P(\alpha_0,y_0,t_0)$) such that (up to a subsequence)
\[
\alpha_n\rightharpoonup \alpha_0\;\;\mbox{in}\;\; L^2((t_0,T),\R^N)\;\;\mbox{and}\;\; x_n\to x_0\;\;\mbox{in}\;\;C([t_0,T],\R^N).
\]
By \eqref{eq:1.4}, we obtain
\[
v(y_0,t_0) \leq \frac{1}{2}\int_{t_0}^T|\alpha_0(s)|^2 \d s+ g(x_0(T)).
\]
But \eqref{eq:1.31}, the weak lower semi-continuity of the norm and $x_n(T) \to x_0(T)$ yield
\[
v(y_0,t_0) \geq \liminf_{n\to\infty}\le\frac{1}{2}\int_{t_0}^T|\alpha_n(s)|^2\d s + g(x_n(T))\pr \geq \frac{1}{2}\int_{t_0}^T |\alpha_0(s)|^2 \d s + g(x_0(T)).
\]
The last two inequalities combined give
\begin{equation}\label{eq:1.41}
v(y_0,t_0) = \frac{1}{2}\int_{t_0}^T |\alpha_0(s)|^2 \d s + g(x_0(T)).
\end{equation}
We again use the weak lower semi-continuity of the norm and \eqref{eq:1.40} to get
\[
\|\alpha_0\|_{L^2((t_0,T),\R^N)}\leq \liminf_{n\to\infty}\|\alpha_n\|_{L^2((t_0,T),\R^N)}\leq 2\sqrt{\|g\|_{L^\infty}},
\]
as claimed.
\end{proof}

\section{Continuity of the  value function in dimension $N=2$ and lower semi-continuity in higher dimensions}\label{sec:cont}
In this section we prove one of the main results of the paper, the continuity of the value function. We begin with an auxiliary lemma. Its proof is divided into three steps which are technically different. The reason is that the regularity of the value function depends on whether
$x\in\{x_1=x_2\}$, where $E(x)$ becomes degenerate, or not.
\begin{lemma}\label{lem:1.2}
Fix $y_0\in \R^2$ and $t_0\in[0,T)$.
If $y_n\to y_0$ in $\R^2$, then, for large $n$, there are $t_n\in [t_0,T]$, measurable $\alpha_n:[t_0,t_n)\to\R^2$  and absolutely continuous $x_n:[t_0,t_n]\to \R^2$ satisfying
\begin{enumerate}
\item[\emph{(1)}] $t_n\to t_0^+ $,
\item[\emph{(2)}] $\int_{t_0}^{t_n} |\alpha_n(t)|^2\d t \to 0$,
\item[\emph{(3)}] $x_n$ is a solution of
\[
\begin{cases}
\dot{x}_n(t) = \sqrt{E(x_n(t))}\alpha_n(t),&\text{for }t\in (t_0,t_n),\\
x_n(t_0) = y_n,
\end{cases}
\]
\item[\emph{(4)}] $x_n(t_n) = y_0$.
\end{enumerate}
\end{lemma}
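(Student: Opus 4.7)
My approach is to split the argument into three cases according to the position of $y_0$ and $y_n$ relative to the degenerate set $D := \{(x_1, x_2) \in \R^2 : x_1 = x_2\}$, on which $E$ loses rank. In every case I prescribe an explicit trajectory $x_n$ on $[t_0, t_n]$ and then define $\alpha_n$ by (essentially) inverting $\dot{x}_n = \sqrt{E(x_n)}\,\alpha_n$; the time $t_n$ remains free and is chosen at the end to balance $t_n - t_0 \to 0^+$ against $\int_{t_0}^{t_n} |\alpha_n|^2 \d t \to 0$.

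\emph{Case A} ($y_0 \notin D$). Here $E(y_0)$ is strictly positive definite, so by \eqref{eq:1.44} there is a neighborhood $U$ of $y_0$ on which $\sqrt{E(\cdot)}$ is uniformly invertible with a uniformly bounded inverse. For large $n$, both $y_n$ and the segment $[y_n, y_0]$ lie in $U$. Taking $x_n$ to be the affine interpolation from $y_n$ at $t_0$ to $y_0$ at $t_n$ and $\alpha_n(t) := \sqrt{E(x_n(t))}^{-1}\dot{x}_n$, the cost is $O(|y_n - y_0|^2/(t_n - t_0))$, and choosing $t_n - t_0 := \max\{|y_n - y_0|, 1/n\}$ makes both $t_n - t_0 \to 0$ and the cost vanish. \emph{Case B} ($y_0, y_n \in D$) is similar: $y_n - y_0$ lies in the eigenspace $\operatorname{span}\{(1,1)\}$ of $\sqrt{E}|_D$ associated with its nonzero eigenvalue $\sqrt{2}$ (see \eqref{eq:1.39}), so the same affine interpolation, constrained to $D$, does the job.

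\emph{Case C} ($y_0 \in D$, $y_n \notin D$) is the crucial one. I pass to rotated coordinates $(z_1, z_2) := \tfrac{1}{\sqrt{2}}(x_1 + x_2, x_1 - x_2)$, in which the explicit formula \eqref{eq:1.39} diagonalizes $\sqrt{E}$ and the state equation decouples into
\[
\dot{z}_1 = \sqrt{1 + e^{-\sqrt{2}|z_2|}}\,\beta_1, \qquad \dot{z}_2 = \sqrt{1 - e^{-\sqrt{2}|z_2|}}\,\beta_2,
\]
with $|\beta| = |\alpha|$ and $D = \{z_2 = 0\}$. Writing $y_n = (z_1^n, z_2^n)$ and $y_0 = (z_1^0, 0)$ and assuming $z_2^n > 0$ without loss of generality, I construct $x_n$ in two phases. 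In \emph{Phase 1}, I set $\beta_1 \equiv 0$ and $\beta_2 \equiv -c_n$ for a constant $c_n > 0$: then $z_1$ stays at $z_1^n$, while separation of variables in $\dot{z}_2 = -c_n\sqrt{1 - e^{-\sqrt{2}z_2}}$ together with the estimate $\sqrt{1 - e^{-\sqrt{2}v}} \asymp v^{1/2}$ as $v \to 0^+$ yields a reaching time $\tau_n^{(1)} \lesssim \sqrt{z_2^n}/c_n$ and a cost $\lesssim c_n\sqrt{z_2^n}$. Choosing $c_n := (z_2^n)^{1/4}$ makes both $\tau_n^{(1)} \lesssim (z_2^n)^{1/4}$ and the cost $\lesssim (z_2^n)^{3/4}$ vanish. \emph{Phase 2} starts from $(z_1^n, 0) \in D$ and applies Case B to steer along $D$ to $(z_1^0, 0) = y_0$ in vanishing time and cost. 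Concatenating the two phases and rotating back yields the required $(x_n, \alpha_n, t_n)$.

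The main obstacle is Phase 1 of Case C: the trajectory must reach the degenerate set $\{z_2 = 0\}$ in vanishing time, even though the controlled eigenvalue $\lambda_-(z_2) = \sqrt{1 - e^{-\sqrt{2}|z_2|}}$ collapses there. The $1/2$-H\"older singularity $\lambda_-(z_2) \asymp |z_2|^{1/2}$ is exactly what makes this possible, and the scaling $c_n = (z_2^n)^{1/4}$ extinguishes the reaching time and the $L^2$-energy simultaneously. The remaining verification --- absolute continuity of $x_n$, checking (1)--(4), and passing from rotated back to original coordinates --- is routine.
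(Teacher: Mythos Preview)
Your plan is correct. The three-way case split and the handling of Cases A and B via affine interpolation coincide with the paper's Cases III and II.

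Where you genuinely diverge is the hard case (your Case C, the paper's Case I). The paper does \emph{not} rotate or split into phases: it writes down a single explicit trajectory
\[
x_n(t) = e^{\frac{1}{t_n-t_0}-\frac{1}{t_n-t}}\,y_n + \Bigl(1-e^{\frac{1}{t_n-t_0}-\frac{1}{t_n-t}}\Bigr)y_0,
\qquad t_n-t_0 := \bigl(|y_0^1-y_n^1|+|y_0^2-y_n^2|\bigr)^{1/4},
\]
which stays off $D$ for $t<t_n$, sets $\alpha_n(t):=\sqrt{E(x_n(t))}^{-1}\dot x_n(t)$, and then carries out a rather involved direct estimate of $\int|\alpha_n|^2$, ultimately computing $\int_{t_0}^{t_n} e^{-1/(t_n-t)}(t_n-t)^{-4}\,\d t$ by the substitution $s=e^{-1/(t_n-t)}$. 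By contrast, your rotation diagonalizes $\sqrt{E}$ and reduces Phase~1 to the scalar ODE $\dot z_2=-c_n\sqrt{1-e^{-\sqrt{2}z_2}}$; separation of variables and the asymptotics $1-e^{-\sqrt{2}v}\asymp v$ near $0$ give the reaching time $\tau_n^{(1)}\asymp c_n^{-1}\sqrt{z_2^n}$ and cost $c_n^2\tau_n^{(1)}\asymp c_n\sqrt{z_2^n}$ with almost no computation, after which $c_n=(z_2^n)^{1/4}$ balances both. Your argument is shorter and makes transparent \emph{why} the $1/2$-H\"older degeneracy of $\sqrt{E}$ is exactly enough; the paper's single-formula approach avoids the phase junction and the appeal to Case B, but pays for it with heavier calculus. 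One small point worth making explicit when you write it up: the scalar ODE in Phase~1 has nonunique solutions at $z_2=0$, so you should say that you \emph{choose} the solution that reaches $0$ at time $\tau_n^{(1)}$ (which exists because $\int_0^{z_2^n}(1-e^{-\sqrt{2}v})^{-1/2}\,\d v<\infty$) and then switch controls.
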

\begin{proof}
Let $y_0=(y_0^1, y_0^2), y_n=(y_n^1, y_n^2), x_0=(x_0^1, x_0^2), x_n=(x_n^1,x_n^2)$. We split the proof into three cases for convenience, depending whether $y_0$ and $y_n$, a sequence approximating $y_0$, are on the line $y^1=y^2$ or not. Notice that the general case can be deduced by combining the three cases together.

\noindent \textbf{Case I:}
\begin{equation}\label{NUMER}
y_0^1=y_0^2\;\;\mbox{and}\;\;y_n^1 \neq y_n^2, n\geq 1.
\end{equation}
We define
\begin{equation}\label{eq:1.17}
t_n := t_0 + \le |y_0^1 - y_n^1| + |y_0^2 - y_n ^2| \pr ^{1/4}.
\end{equation}
Obviously claim (1) holds.
We define $x_n:[t_0,t_n)\to\R^2$ by the formula
\[
x_n(t) = e^{\frac{1}{t_n - t_0} - \frac{1}{t_n - t}} y_n + \le 1 - e^{\frac{1}{t_n-t_0} - \frac{1}{t_n - t}}\pr y_0, \quad t\in [t_0,t_n).
\]
Easy calculations show that $x_n$ is an absolutely continuous (even $C^1$) solution of the equation
\begin{equation}\label{eq:1.19}
\begin{cases}
\dot{x}_n(t) = \frac{1}{(t_n-t)^2}(y_0 -x_n(t)), \quad t\in [t_0,t_n),\\
x_n(t_0) = y_n.
\end{cases}
\end{equation}
Observe that  $x_n(t_n)  = \lim_{t\to t_n ^-}x_n(t) = y_0$, so claim (4) is satisfied.
We have
\begin{equation}\label{eq:1.18}
y_0 - x_n(t) = e^{\frac{1}{t_n - t_0} - \frac{1}{t_n -t}}(y_0 - y_n), \quad t\in[t_0,t_n),
\end{equation}
and
\begin{equation}\label{eq:1.21}
\rho_n(t) := |x_n^1(t) - x_n^2(t)| = e^{\frac{1}{t_n - t_0} - \frac{1}{t_n -t}}|y_n^1 - y_n^2|,\quad t\in [t_0,t_n).
\end{equation}
In particular $x_n^1(t) \neq x_n^2(t)$, for $t\in[t_0,t_n)$, so $\sqrt{E(x_n(t))}^{-1}$ is well-defined and is given by
\[
\sqrt{E(x_n(t))}^{-1} =
\frac{1}{2}\le
\begin{matrix}
\frac{1}{\sqrt{1 + e^{-\rho_n(t)}}} + \frac{1}{\sqrt{1-e^{-\rho_n(t)}}}  &  \frac{1}{\sqrt{1 + e^{-\rho_n(t)}}} - \frac{1}{\sqrt{1-e^{-\rho_n(t)}}}\\
\frac{1}{\sqrt{1 + e^{-\rho_n(t)}}} - \frac{1}{\sqrt{1-e^{-\rho_n(t)}}} & \frac{1}{\sqrt{1 + e^{-\rho_n(t)}}} + \frac{1}{\sqrt{1-e^{-\rho_n(t)}}}
\end{matrix}
\pr,
\]
for $t\in [t_0,t_n).$

Hence, we may define $\alpha_n:[t_0,t_n)\to \R^2$ by
\begin{equation}\label{eq:1.23}
\alpha_n(t) := \frac{1}{(t_n-t)^2}\sqrt{E(x_n(t))}^{-1}(y_0-x_n(t)),\quad t\in [t_0,t_n).
\end{equation}
Observe that $\sqrt{E(x_n(t))} \alpha_n(t) = \frac{1}{(t_n-t)^2}(y_0-x_n(t))$, i.e., the right-hand side of \eqref{eq:1.19}.
Hence claim (3) is satisfied.

All that remains is to show claim (2).
Using \eqref{eq:1.18} and \eqref{NUMER}, after some computations, we obtain
\[
\alpha_n(t) = \frac{e^{\frac{1}{t_n - t_0} - \frac{1}{t_n - t}}}{2(t_n-t)^2}
\le
\begin{matrix}
\vspace{0.3cm}
\dfrac{y_0^1 - y_n^1 + y_0^2 - y_n^2}{\sqrt{1 + e^{-\rho_n(t)}}} - \dfrac{y_n^1 - y_n^2}{\sqrt{1 - e^{-\rho_n(t)}}}\\
\dfrac{y_0^1 - y_n^1 + y_0^2 - y_n^2}{\sqrt{1 + e^{-\rho_n(t)}}} + \dfrac{y_n^1 - y_n^2}{\sqrt{1 - e^{-\rho_n(t)}}}
\end{matrix}
\pr, \quad t\in [t_0,t_n).
\]
We have
\begin{equation}\label{eq:1.22}
|\alpha_n(t)|^2 = \frac{1}{2}\underbrace{\frac{e^{\frac{2}{t_n - t_0} - \frac{2}{t_n - t}}(y_0^1-y_n^1+y_0^2-y_n^2)^2}{(t_n-t)^4\le 1 + e^{-\rho_n(t)} \pr}}_{=: I_{1,n}(t)} + \frac{1}{2}\underbrace{\frac{e^{\frac{2}{t_n - t_0} - \frac{2}{t_n - t}}(y_n^1- y_n^2)^2}{(t_n-t)^4\le 1- e^{-\rho_n(t)}\pr }}_{=:I_{2,n}(t)}, \quad t\in [t_0,t_n).
\end{equation}
We estimate each of the above terms separately.
Note that $\rho_n(t) \geq 0$ so
\[
I_{1,n}(t)\leq \frac{e^{\frac{2}{t_n - t_0} - \frac{2}{t_n - t}}(y_0^1-y_n^1+y_0^2-y_n^2)^2}{(t_n-t)^4} = : h_n(t).
\]
Next, we compute
\[
h_n'(t)=e^{\frac{2}{t_n-t_0}}(y_0^1-y_n^1+y_0^2-y_n^2)^2\frac{-2e^{\frac{-2}{t_n-t}}(t_n-t)^2[1-2(t_n-t)]}{(t_n-t)^8}.
\]
Hence, the function $h_n$ has a negative derivative on $[t_0,t_n)$, provided that $n$ is sufficiently large.
Thus, $h_n$ attains maximum on $[t_0,t_n)$ at $t_0$, and so we gain
\begin{equation}\label{eq:1.20}
\sup_{t\in[t_0,t_n)}I_{1,n}(t) \leq h_n(t_0)= \frac{(y_0^1-y_n^1+y_0^2-y_n^2)^2}{(t_n-t_0)^4} \to 0, \text{ as }n\to \infty,
\end{equation}
the last convergence holds due to \eqref{eq:1.17}.

Next, note that
\[
I_{2,n}(t) = \frac{1}{2}\frac{\rho_n^2(t)}{(t_n-t)^4\le 1-e^{-\rho_n(t)}\pr }\;.
\]
Due to \eqref{NUMER}, $\rho_n(t)>0$ and using the inequality
\[
\frac{1}{1-e^{-x}}\leq \frac{1+x}{x},\quad  x > 0,
\]
we get
\[
I_{2,n}(t) \leq \frac{1}{2}\frac{\rho_n^2(t)(1+\rho_n(t))}{(t_n-t)^4\rho_n(t)} = \frac{1}{2}\frac{\rho_n(t)(1+\rho_n(t))}{(t_n-t)^4},\quad t\in [t_0,t_n).
\]
In view of \eqref{NUMER} and since $y_n\to y_0$, $\sup_{t\in[t_0,t_n)}\rho_n(t) \to 0$ as $n\to \infty$. Hence, for sufficiently large $n$, we obtain
\[
I_{2,n}(t) \leq \frac{\rho_n(t)}{(t_n-t)^4}, \quad t\in[t_0,t_n).
\]
By \eqref{eq:1.21} and the above inequality
\[
\int_{t_0}^{t_n} I_{2,n}(t)\d t \leq |y_n^1-y_n^2|e^\frac{1}{t_n-t_0}\int_{t_0}^{t_n}\frac{e^{-\frac{1}{t_n-t}}}{(t_n-t)^4}\d t.
\]
We change the variables $e^{-\frac{1}{t_n-t}} = s $ and get
\[
\begin{aligned}
\int_{t_0}^{t_n} I_{2,n}(t)\d t &\leq |y_n^1-y_n^2|e^\frac{1}{t_n-t_0}\int_0^{e^{-\frac{1}{t_n-t_0}}}\ln^2(s)\d s\\
&= |y_n^1-y_n^2|e^\frac{1}{t_n-t_0}\Big [s(\ln^2(s)-2\ln(s) +2) \Big ]_{s= 0}^{s=e^{-\frac{1}{t_n-t_0}}}\\
&= |y_n^1-y_n^2|e^\frac{1}{t_n-t_0}\left [e^{-\frac{1}{t_n-t_0}}\le \frac{1}{(t_n -t_0)^2} + 2\frac{1}{t_n-t_0} + 2  \pr \right ]\\
&= \frac{|y_n^1-y_n^2|}{(t_n - t_0)^2}+2\frac{|y_n^1-y_n^2|}{t_n- t_0} + 2|y_n^1 - y_n^2|.
\end{aligned}
\]
But $|y_n^1 - y_n^2|\leq |y_n^1- y_0^1| + |y_0^2 - y_n^2|$ due to \eqref{NUMER}. Hence, we obtain
\[
\int_{t_0}^{t_n} I_{2,n}(t)\d t  \leq \frac{|y_n^1- y_0^1|+|y_0^2 - y_n^2|}{(t_n - t_0)^2}+2\frac{|y_n^1- y_0^1|+|y_0^2 - y_n^2|}{t_n- t_0} + 2|y_n^1 - y_n^2| \stackrel{n\to \infty}\longrightarrow 0,
\]
by \eqref{eq:1.17}.
This, together with \eqref{eq:1.20} and \eqref{eq:1.22}, proves claim (2).

\vspace{0.5cm}

\noindent \textbf{Case II: $y_0^1 = y_0^2$ and $y_n^1 = y_n^2$, $n\geq 1$.}

Let $t_n := t_0 + \sqrt{2}|y_0^1 - y_n^1|$.
Obviously, (1) is satisfied.
Let $x_n :[t_0,t_n] \to \R^2$ be the solution of
\begin{equation}\label{eq:1.24}
\begin{cases}
\dot{x}_n(t) = \frac{\mathrm{sgn}(y_0^1-y_n^1)}{\sqrt{2}}
\le
\begin{matrix}
1\\
1
\end{matrix}
\pr, &t\in [t_0,t_n],\\
x_n(t_0) = y_n.
\end{cases}
\end{equation}
Then, for $t\in[t_0,t_n]$,
\[
x_n(t) = y_n + (t-t_0)\frac{\mathrm{sgn}(y_0^1-y_n^1)}{\sqrt{2}}\le
\begin{matrix}
1\\
1
\end{matrix}
\pr,
\]
so in particular
\begin{equation}\label{eq:1.25}
x_n^1(t) = x_n^2(t),\quad t\in [t_0,t_n].
\end{equation}
Moreover, $x_n(t_n) = y_0$, so (4) is satisfied.

We define $\alpha_n:[t_0,t_n]\to \R^2$ by
\[
\alpha_n(t) = \mathrm{sgn}(y_0^1-y_n^1)
\le
\begin{matrix}
1\\
0
\end{matrix}
\pr, \quad t\in [t_0,t_n],
\]
so (2) follows immediately.

By \eqref{eq:1.25}, we have
\[
\sqrt{E(x_n(t))} = \frac{1}{\sqrt{2}}
\le
\begin{matrix}
1 & 1\\
1 & 1
\end{matrix}
\pr,\quad t\in [t_0,t_n].
\]
Thus, for $t\in[t_0,t_n]$,
\[
\sqrt{E(x_n(t))}\alpha_n(t)  = \frac{\mathrm{sgn}(y_0^1-y_n^1)}{\sqrt{2}}
\le
\begin{matrix}
1\\
1
\end{matrix}
\pr = \dot{x}_n(t),
\]
so (3) is satisfied.

\vspace{0.5cm}

\noindent \textbf{Case III: $y_0^1 \neq y_0^2$.}

We define $t_n := t_0 + |y_0 - y_n|$, so (1) holds.
Without loss of generality we assume that $y_n\neq y_0$, $n\geq 1$.
Let $x_n:[t_0,t_n]\to \R^2$ be the solution of
\begin{equation}\label{eq:1.26}
\begin{cases}
\dot{x}_n(t) = \frac{1}{|y_0-y_n|}(y_0-y_n), &\text{for }t\in [t_0,t_n],\\
x_n(t_0) = y_n.
\end{cases}
\end{equation}
Then, for $t\in[t_0,t_n]$,
\[
x_n(t) = y_n + \frac{t-t_0}{|y_0-y_n|}(y_0 - y_n)
\]
and we see that (4) is satisfied.
We define $\alpha_n:[t_0,t_n]\to \R^2$ by
\[
\alpha_n(t) := \frac{1}{|y_0-y_n|}\sqrt{E(x_n(t))}^{-1}(y_0-y_n), \quad t\in [t_0,t_n].
\]
For sufficiently large $n$, $x_n([t_0,t_n])\subset B(y_0,\delta)$, where $\delta>0$ is chosen so small that
\[
\overline{B(y_0,\delta)}\cap \{(x_1,x_2)\in \R^2 \mid x_1 = x_2\} = \emptyset.
\]
As a result, the matrix $\sqrt{E(x_n(t))}^{-1}$ is well-defined and its coefficients are bounded by some constant uniformly with respect to $n$ and $t\in[t_0,t_n]$.
Thus, we have
\[
|\alpha_n(t)| \leq c,\quad t\in [t_0,t_n],
\]
for a suitable constant $c>0$.
This immediately implies (2).
By the very definition of $\alpha_n$ and by \eqref{eq:1.26} we obtain (3).
\end{proof}
We now have the necessary technical tools to prove the continuity of the value function.
We begin with the upper semi-continuity.
\begin{theorem}\label{thm:1.1}
The value function $v:\R^2\times[0,T]\to\R$ is upper semi-continuous.
\end{theorem}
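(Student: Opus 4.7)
My plan is to show, for any sequence $(y_n,t_n)\to(y_0,t_0)$ in $\R^2\times[0,T]$, that an admissible control-trajectory pair $(\alpha_n,x_n)$ starting from $(y_n,t_n)$ can be built whose total cost converges to $v(y_0,t_0)$; since $v(y_n,t_n)$ is bounded above by the cost of any admissible pair, this will give $\limsup_n v(y_n,t_n)\leq v(y_0,t_0)$. The easy case $t_0=T$ is dispatched by choosing $\alpha_n\equiv 0$ on $[t_n,T]$, so the trajectory stays at $y_n$ and the cost $g(y_n)$ converges to $g(y_0)=v(y_0,T)$ by continuity of $g$. The rest of the argument treats $t_0<T$, and I will assume without loss of generality that $t_n<T$ for all $n$.

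I will first fix an optimal pair $(\alpha_0,x_0)$ for $(y_0,t_0)$ via Lemma \ref{lem:1.3}, so that
\[
v(y_0,t_0) = \frac{1}{2}\int_{t_0}^T |\alpha_0(s)|^2\d s + g(x_0(T)).
\]
Setting $\tau_n:=\max(t_n,t_0)$, I will use the zero control on the (possibly empty) interval $[t_n,\tau_n]$, leaving the trajectory constant at $y_n$. Then I will apply the construction from the proof of Lemma \ref{lem:1.2} with $\tau_n$ in place of the starting time $t_0$ (the explicit formulas in the three cases there carry over verbatim under this substitution, and the quantitative bounds on the cost depend only on $|y_n-y_0|\to 0$). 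This yields a bridge interval $[\tau_n,\sigma_n]$ with $\sigma_n-\tau_n\to 0$, a measurable control $\alpha_n^{(1)}$, and an absolutely continuous trajectory $x_n^{(1)}$ with $x_n^{(1)}(\tau_n)=y_n$, $x_n^{(1)}(\sigma_n)=y_0$, and $\int_{\tau_n}^{\sigma_n}|\alpha_n^{(1)}|^2\d t\to 0$; in particular $\sigma_n\to t_0^+$.

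On the remaining interval $[\sigma_n,T]$ I will time-shift the fixed minimizer, setting $\alpha_n(t):=\alpha_0(t-\sigma_n+t_0)$ and $x_n(t):=x_0(t-\sigma_n+t_0)$ for $t\in[\sigma_n,T]$. Time-homogeneity of the state equation makes $x_n$ a valid solution on this interval with control $\alpha_n$, while $x_n(\sigma_n)=x_0(t_0)=y_0$ matches the endpoint of the bridge, so the concatenated pair $(\alpha_n,x_n)$ is admissible for $(y_n,t_n)$. The decisive feature is that $x_n$ on $[\sigma_n,T]$ is, by fiat, a time-translate of the specific optimal trajectory $x_0$, which sidesteps the non-uniqueness of solutions to $P(\alpha_0,y_0,t_0)$. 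After the substitution $s=t-\sigma_n+t_0$, the total cost of the constructed pair reads
\[
\frac{1}{2}\int_{\tau_n}^{\sigma_n}|\alpha_n^{(1)}|^2\d t + \frac{1}{2}\int_{t_0}^{T-\sigma_n+t_0}|\alpha_0(s)|^2\d s + g\bigl(x_0(T-\sigma_n+t_0)\bigr),
\]
whose three summands tend, respectively, to $0$, to $\tfrac12\int_{t_0}^T|\alpha_0|^2\d s$, and to $g(x_0(T))$ as $\sigma_n\to t_0^+$, by Lemma \ref{lem:1.2}, absolute continuity of the integral in its upper limit, and continuity of $x_0$ and $g$.

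The hard part is precisely this time-shift step. A more naive attempt, simply applying the control $\alpha_0$ from $(y_n,t_0)$ and invoking Lemma \ref{lem:1.5} for compactness, would only produce a trajectory converging to \emph{some} solution of $P(\alpha_0,y_0,t_0)$, whose terminal value need not coincide with $x_0(T)$; indeed, the optimality of $(\alpha_0,x_0)$ forces $g$ at that terminal value to be no smaller than $g(x_0(T))$, yielding the opposite of the desired inequality. The explicit time-shift above selects, among all solutions starting at $y_0$ with control $\alpha_0$, the unique time-translate of $x_0$, so that the terminal value is $x_0(T-\sigma_n+t_0)\to x_0(T)$ by construction.
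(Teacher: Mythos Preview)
Your argument is correct and uses the same key ingredient as the paper --- the bridge construction of Lemma~\ref{lem:1.2} followed by a time-shift of a fixed optimal pair $(\alpha_0,x_0)$ --- but your organization is different. The paper first proves upper semi-continuity in the space variable alone, then splits $v(y_n,t_n)-v(y_0,t_0)$ into a spatial increment and a temporal increment and treats the two cases $t_n\to t_0^-$ and $t_n\to t_0^+$ separately; the latter case uses optimal pairs for the \emph{moving} point $(y_n,t_0)$ together with the compactness Lemma~\ref{lem:1.5} to control $g(x_n(T-t_n+t_0))-g(x_n(T))$. You instead handle both variables in one stroke by padding with the zero control on $[t_n,\tau_n]$, applying the Lemma~\ref{lem:1.2} construction from the variable time $\tau_n=\max(t_n,t_0)$, and then shifting the single fixed optimal trajectory. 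This is legitimate because, as you observe, the explicit formulas in each of the three cases of Lemma~\ref{lem:1.2} involve only the elapsed time $t-\tau_n$ and the length $\sigma_n-\tau_n$, and the cost estimates depend only on $|y_n-y_0|$; the only largeness requirements (e.g.\ $\sigma_n-\tau_n<1/2$ in Case~I, or $x_n^{(1)}([\tau_n,\sigma_n])\subset B(y_0,\delta)$ in Case~III) are likewise independent of the absolute starting time. Your route is slightly more economical, since it dispenses with the compactness argument and with the separate treatment of the temporal increment; the paper's decomposition, on the other hand, isolates the purely spatial statement, which may be of independent interest.
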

\begin{proof}
First, we show that $v$ is upper semi-continuous with respect to the $y$-variable.
Fix $y_0\in \R^2$ and $t_0 \in[0,T)$ (the case $t_0 = T$ is trivial).
We choose $y_n\to y_0$ such that
\[
\lim_{n\to \infty} v(y_n,t_0) = \limsup_{y\to y_0}v(y,t_0).
\]
We aim to prove the following inequality
\begin{equation}\label{eq:1.16}
\lim_{n\to \infty} v(y_n,t_0) \leq v(y_0,t_0).
\end{equation}
By Lemma \ref{lem:1.3}, there is $\alpha_0\in L^2((0,T),\R^2)$ and a solution $x_0$ of ($P(\alpha_0,y_0,t_0)$) such that
\begin{equation}\label{eq:1.27}
v(y_0,t_0) = \frac{1}{2}\int_{t_0}^T |\alpha_0(t)|^2 \d t + g(x_0(T))).
\end{equation}
Since $y_n\to y_0$, we apply Lemma \ref{lem:1.2} and obtain $(t_n)$, $(\alpha_n)$ and $(x_n)$ such that
\[
\begin{aligned}
&t_n\to t_0^+, \quad \int_{t_0}^{t_n} |\alpha_n(t)|^2\d t \to 0,\quad n \to \infty,\\
&\begin{cases}
\dot{x}_n(t) = \sqrt{E(x_n)}\alpha_n(t),&\text{for }t\in (t_0,t_n),\\
x_n(t_0) = y_n,
\\
x_n(t_n) = y_0.
\end{cases}
\end{aligned}
\]
Now, we define
\[
\tilde{x}_n(t) :=
\begin{cases}
x_n(t),&\text{for }t\in[t_0,t_n),\\
x_0(t -t_n +t_0), &\text{for }t\in[t_n,T]
\end{cases}
\]
and
\[
\tilde{\alpha}_n(t):=
\begin{cases}
\alpha_n(t), &\text{for }t\in[t_0,t_n),\\
\alpha_0(t - t_n + t_0), &\text{for }t\in[t_n,T].
\end{cases}
\]
Clearly, $\tilde{x}_n $ is absolutely continuous and is a solution of ($P(\tilde{\alpha}_n,y_n,t_0)$), hence
\begin{equation}\label{eq:1.28}
v(y_n,t_0) \leq \frac{1}{2}\int_{t_0}^T|\tilde{\alpha}_n(t)|^2 \d t - g(\tilde{x}_n(T)).
\end{equation}
Using the properties of $t_n$, $\tilde{\alpha}_n$ and $\tilde{x}_n$, we obtain
\[
\begin{aligned}
\int_{t_0}^T|\tilde{\alpha}_n(t)|^2 \d t &= \int_{t_0}^{t_n}|\alpha_n(t)|^2 \d t  + \int_{t_n}^T|\alpha_0(t-t_n+t_0)|^2 \d t \to \int_{t_0}^T|\alpha_0(t)|^2 \d t ,\\
g(\tilde{x}_n(T)) &= g(x_0(T-t_n+t_0))\to g(x_0(T)).
\end{aligned}
\]
Thus, \eqref{eq:1.28}, the above convergences and \eqref{eq:1.27}, yield
\[
\lim_{n\to\infty} v(y_n,t_0)  \leq v(y_0,t_0).
\]

We now show that $v$ is upper semi-continuous with respect to both variables.
Fix $(y_0,t_0)$ and $(y_n,t_n)\in\R^2\times [0,T]$ such that $(y_n,t_n)\to (y_0,t_0)$ and
\[
\lim_{n\to\infty} v(y_n,t_n) = \limsup_{(y,t)\to (y_0,t_0)}v(y,t).
\]
We will show that
\[
\lim_{n\to\infty} v(y_n, t_n)  \leq v(y_0,t_0).
\]
Note that
\begin{equation}\label{eq:1.34}
v(y_n,t_n) - v(y_0,t_0) = \underbrace{v(y_n,t_n) - v(y_n,t_0)}_{=:I_n} + \underbrace{v(y_n,t_0) - v(y_0,t_0)}_{=:J_n}
\end{equation}
By the first part of the proof $v$ is upper semicontinuous with respect to $y$, so $\limsup_{n\to\infty} J_n \leq 0$.
Thus, if we show that $\limsup_{n\to\infty}I_n \leq 0$, we are done.
To deal with $I_n$ we consider two cases.

\noindent \textbf{Case I: $t_n\to t_0^-$.}
For every $n\geq 1$, by Lemma \ref{lem:1.3}, there are a control $\alpha_n$ and a solution $x_n$ of ($P(\alpha_n, y_n, t_0)$) such that
\begin{equation}\label{wazne}
v(y_n,t_0) = \frac{1}{2}\int_{t_0}^T|\alpha_n(s)|^2\d s + g(x_n(T)).
\end{equation}
We define
\[
\begin{aligned}
\tilde{\alpha}_n(t)&:=
\begin{cases}
0&\text{for }t\in[t_n,t_0),\\
\alpha_n(t)&\text{for }t\in[t_0,T],
\end{cases}\\
\tilde{x}_n(t)&:=
\begin{cases}
y_n&\text{for }t\in[t_n,t_0),\\
x_n(t)&\text{for }t\in[t_0,T].
\end{cases}
\end{aligned}
\]
Obviously, $\tilde{x}_n$ is a solution of ($P(\tilde{\alpha}_n, y_n, t_n)$), so by \eqref{eq:1.4},
\[
v(y_n,t_n) \leq \frac{1}{2}\int_{t_n}^T|\tilde{\alpha}_n(s)|^2\d s + g(\tilde{x}_n(T)).
\]
Consequently, we have
\[
v(y_n,t_n) \leq \frac{1}{2}\int_{t_0}^T|\alpha_n(s)|^2 \d s + g(x_n(T)) = v(y_n,t_0).
\]
where the last identity follows by \eqref{wazne}. Hence, $I_n\leq 0$, for $n\geq 1$.

\noindent \textbf{Case II: $t_n\to t_0^+$.}
For every $n\geq 1$, by Lemma \ref{lem:1.3}, there exist $\alpha_n\in B_{L^2((t_0,T),\R^2)}(2\sqrt{\|g\|_{L^\infty}})$ and a solution $x_n$ of ($P(\alpha_n, y_n, t_0)$) such that
\begin{equation}\label{eq:1.36}
v(y_n,t_0) = \frac{1}{2}\int_{t_0}^T|\alpha_n(s)|^2\d s + g(x_n(T)).
\end{equation}
We define, for $t\in [t_n,T]$,
\[
\begin{aligned}
\tilde{\alpha}_n(t):= \alpha_n(t-t_n+t_0)\\
\tilde{x}_n(t):= x_n(t-t_n+t_0).
\end{aligned}
\]
Then $\tilde{x}_n$ is a solution of ($P(\tilde{\alpha}_n,y_n,t_n)$), so
\begin{equation}\label{eq:1.35}
\begin{aligned}
v(y_n, t_n) &\leq \frac{1}{2}\int_{t_n}^T|\tilde{\alpha}_n(s)|^2 \d s + g(\tilde{x}_n(T)) \\
&= \frac{1}{2}\int_{t_0}^{T-t_n+t_0}|\alpha_n(s)|^2 \d s + g(x_n(T - t_n + t_0))\\
&\leq \frac{1}{2}\int_{t_0}^{T}|\alpha_n(s)|^2 \d s + g(x_n(T - t_n + t_0)).
\end{aligned}
\end{equation}
Let us choose a subsequence $(y_{n_k},t_{n_k})$ such that
\[
\lim_{k\to\infty}(v(y_{n_k},t_{n_k}) - v(y_{n_k},t_0) )= \limsup_{n\to\infty}(v(y_n,t_n) - v(y_n,t_0)).
\]
Since $y_{n_k}\to y_0$, $\alpha_{n_k}\in B_{L^2((t_0,T),\R^2)}(2\sqrt{\|g\|_{L^\infty}})$ and $x_{n_k}$ is a solution of ($P(\alpha_{n_k},y_{n_k},t_0)$), by Lemma \ref{lem:1.5}, $\{x_{n_k}\}$ is relatively compact in $C([t_0,T],\R^2)$.
After passing to a subsequence (still denoted by $x_{n_k}$) we may assume that $x_{n_k}\to x_0$ in $C([t_0,T],\R^2)$.
Then, by \eqref{eq:1.36} and \eqref{eq:1.35}, for every $k\geq 1$,
\[
v(y_{n_k},t_{n_k}) - v(y_{n_k},t_0)\leq g(x_{n_k}(T - t_{n_k} + t_0)) - g(x_{n_k}(T)),
\]
so
\[
\limsup_{n\to\infty} I_n = \lim_{k\to\infty} \le v(y_{n_k},t_{n_k}) - v(y_{n_k},t_0)\pr \leq 0. \qedhere
\]
\end{proof}

\begin{theorem}\label{thm:1.2}
The value function $v:\R^N\times [0,T]\to \R$ is lower semi-continuous.
\end{theorem}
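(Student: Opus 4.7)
The plan is to prove $\liminf_{(y,t)\to(y_0,t_0)}v(y,t)\geq v(y_0,t_0)$ directly from the definition of $v$, in every dimension $N\geq 1$. The main workhorse will be Lemma~\ref{lem:1.5}: I extract weakly and uniformly convergent subsequences from the optimal control--trajectory pairs produced by Lemma~\ref{lem:1.3} and then pass to the limit in the cost using weak lower semi-continuity of the $L^2$-norm and continuity of $g$. No short-time connector analogous to Lemma~\ref{lem:1.2} is needed, which is precisely why lower semi-continuity holds in arbitrary dimension while Theorem~\ref{thm:1.1} is restricted to $N=2$.

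First I would fix $(y_n,t_n)\to(y_0,t_0)$ with $v(y_n,t_n)\to\liminf_{(y,t)\to(y_0,t_0)}v(y,t)$ and invoke Lemma~\ref{lem:1.3} to obtain, for every $n$, an optimal control $\alpha_n\in B_{L^2((t_n,T),\R^N)}(2\sqrt{\|g\|_{L^\infty}})$ and a trajectory $x_n$ solving ($P(\alpha_n,y_n,t_n)$) with
\[
v(y_n,t_n)=\tfrac{1}{2}\int_{t_n}^T|\alpha_n(s)|^2\d s+g(x_n(T)).
\]
Since the controls are uniformly $L^2$-bounded, Lemma~\ref{lem:1.5} applies to the extended pairs $(\tilde{\alpha}_n,\tilde{x}_n)$ on $[\underline{t},T]$: after passing to a subsequence, $\tilde{\alpha}_n\rightharpoonup\alpha_0$ weakly in $L^2((\underline{t},T),\R^N)$ and $\tilde{x}_n\to x_0$ uniformly on $[\underline{t},T]$, with $x_0$ a solution of ($P(\alpha_0,y_0,t_0)$).

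Next I would pass to the liminf in the cost. Because $\tilde{\alpha}_n\equiv 0$ on $[\underline{t},t_n)$, one has $\int_{\underline{t}}^T|\tilde{\alpha}_n|^2\d s=\int_{t_n}^T|\alpha_n|^2\d s$, so the weak lower semi-continuity of the $L^2$-norm combined with $\underline{t}\leq t_0$ gives
\[
\liminf_{n\to\infty}\int_{t_n}^T|\alpha_n(s)|^2\d s\;\geq\;\int_{\underline{t}}^T|\alpha_0(s)|^2\d s\;\geq\;\int_{t_0}^T|\alpha_0(s)|^2\d s,
\]
while continuity of $g$ and uniform convergence $x_n(T)=\tilde{x}_n(T)\to x_0(T)$ give $g(x_n(T))\to g(x_0(T))$. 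Because $x_0$ solves ($P(\alpha_0,y_0,t_0)$), the pair $(\alpha_0|_{[t_0,T]},x_0|_{[t_0,T]})$ is admissible for the infimum defining $v(y_0,t_0)$, and assembling these inequalities yields
\[
\liminf_{n\to\infty}v(y_n,t_n)\;\geq\;\tfrac{1}{2}\int_{t_0}^T|\alpha_0(s)|^2\d s+g(x_0(T))\;\geq\;v(y_0,t_0),
\]
which is the desired bound.

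I do not expect any serious obstacle, since all the nontrivial analysis has been absorbed into Lemma~\ref{lem:1.5}, where the non-Lipschitz factor $\sqrt{E(x)}$ is handled via its $1/2$-H\"older continuity. The only mild subtlety appears when $t_n\to t_0^-$ and $\underline{t}<t_0$, because then $\alpha_0$ could be nonzero on $[\underline{t},t_0]$; but discarding that subinterval only decreases the integral and therefore preserves the inequality, so no extra argument is needed there.
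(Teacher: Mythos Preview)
Your proposal is correct and follows essentially the same approach as the paper's own proof: pick a minimizing sequence via Lemma~\ref{lem:1.3}, apply Lemma~\ref{lem:1.5} to the extended pairs $(\tilde\alpha_n,\tilde x_n)$ on $[\underline t,T]$, and pass to the liminf using weak lower semi-continuity of the $L^2$-norm together with $\underline t\le t_0$. Your observation about the harmless excess mass of $\alpha_0$ on $[\underline t,t_0)$ is exactly the point the paper handles with the inequality $\tfrac12\int_{\underline t}^T|\alpha_0|^2\ge\tfrac12\int_{t_0}^T|\alpha_0|^2$.
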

\begin{remark}
Note that the lower semi-continuity of $v$ is shown in any dimension $N$.
\end{remark}
\begin{proof}
Fix $y_0\in \R^N$ and $t_0\in [0,T]$.
Choose $y_n\to y_0$ and $t_n \to t_0$ so that
\[
\lim_{n\to\infty}v(y_n,t_n) = \liminf_{(y,t)\to(y_0,t_0)}v(y,t).
\]
We will show that
\[
\lim_{n\to\infty} v(y_n,t_n) \geq v(y_0,t_0).
\]
For every $n\geq 1$, by Lemma \ref{lem:1.3}, there are $\alpha_n \in B_{L^2((t_n,T),\R^N)}(2\sqrt{\|g\|_{L^\infty}})$ and a solution $x_n$ of ($P(\alpha_n,y_n,t_n)$) such that
\[
v(y_n,t_n) = \frac{1}{2}\int_{t_n}^T|\alpha_n(s)|^2\d s + g(x_n(T)).
\]
We use Lemma \ref{lem:1.5} and its notation.
We may assume, after passing to a subsequence, that
\[
\begin{aligned}
\tilde{\alpha}_{n_k}&\rightharpoonup \alpha_0 \quad\text{in }L^2((\underline{t},T),\R^N),\\
\tilde{x}_{n_k}&\to x_0 \quad\text{in }C([\underline{t},T],\R^N),
\end{aligned}
\]
and $x_0$ is a solution of ($P(\alpha_0,y_0,t_0)$).
By the above and using the properties of $\tilde{\alpha}_{n_k}$, $\tilde{x}_{n_k}$ and $\underline{t}$, we obtain
\[
\begin{aligned}
\lim_{n\to\infty}v(y_n,t_n) &= \lim_{k\to\infty}v(y_{n_k},t_{n_k}) = \liminf_{k\to\infty}v(y_{n_k},t_{n_k})\\
&=\liminf_{k\to\infty}\left[\frac{1}{2}\int_{t_{n_k}}^T|\alpha_{n_k}(s)|^2\d s + g(x_{n_k}(T))\right]\\
&=\liminf_{k\to\infty}\left[\frac{1}{2}\int_{\underline{t}}^T|\tilde{\alpha}_{n_k}(s)|^2\d s + g(\tilde{x}_{n_k}(T))\right]\\
&\geq \frac{1}{2}\int_{\underline{t}}^T|\alpha_0(s)|^2\d s + g(x_0(T))\\
&\geq \frac{1}{2}\int_{t_0}^T|\alpha_0(s)|^2\d s + g(x_0(T))\\
&\geq v(y_0,t_0),
\end{aligned}
\]
where we used the weak lower semi-continuity of the norm and \eqref{eq:1.4}.
\end{proof}

We conclude this section with a theorem summarizing the obtained results.
\begin{theorem}\label{thm:1.3}
If $g$ is continuous and bounded, then the value function $v:\R^2\times[0,T]\to \R$ defined by \eqref{eq:1.4} is continuous and
$\|v\|_{L^\infty}\leq \|g\|_{L^\infty}$.
\end{theorem}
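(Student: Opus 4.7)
The plan is straightforward: this is a summary theorem that assembles results already established in the preceding two subsections, so all of the real work is done. I will write the proof by citation.

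First, for the continuity claim I would combine Theorem \ref{thm:1.1} and Theorem \ref{thm:1.2}. Theorem \ref{thm:1.1} has just shown that $v:\R^2\times[0,T]\to\R$ is upper semi-continuous, and Theorem \ref{thm:1.2} shows that $v$ is lower semi-continuous in any dimension $N\geq 1$, specialized here to $N=2$. A function that is both upper and lower semi-continuous on a metric space is continuous, which gives the first assertion. Notice that the restriction to $N=2$ in the statement is forced exactly by Theorem \ref{thm:1.1}, whose proof relies on the two-dimensional construction in Lemma \ref{lem:1.2}; the lower semi-continuity is dimension-free and so imposes no extra restriction.

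Second, for the bound $\|v\|_{L^\infty}\leq \|g\|_{L^\infty}$, I would simply recall that this was already derived en route to Lemma \ref{lem:1.3}. The admissible choice $\alpha\equiv 0$ produces the constant trajectory $x\equiv y_0$, giving
\[
v(y_0,t_0)\leq \tfrac{1}{2}\int_{t_0}^T 0\,\d t + g(y_0) \leq \|g\|_{L^\infty},
\]
while the non-negativity of the running cost $\tfrac12|\alpha|^2$ forces $v(y_0,t_0)\geq -\|g\|_{L^\infty}$ for every admissible pair, as recorded in \eqref{eq:1.29} and the line preceding it. Taking the supremum over $(y_0,t_0)\in\R^2\times[0,T]$ yields the $L^\infty$ bound.

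There is no genuine obstacle in this step; the hard analytic content, in particular the construction of the auxiliary short-time trajectories used to pass from $y_n$ to $y_0$ with vanishing cost (Lemma \ref{lem:1.2}) and the Arzel\`a--Ascoli compactness argument for state trajectories (Lemma \ref{lem:1.5}), has already been absorbed into Theorems \ref{thm:1.1} and \ref{thm:1.2}. The only care needed is verifying that the hypotheses of the summary statement, namely that $g$ is continuous and bounded, are precisely what Theorems \ref{thm:1.1} and \ref{thm:1.2} require, which is the case since continuity of $g$ is used exactly to pass to the limit $g(x_n(T))\to g(x_0(T))$ along the uniformly convergent trajectories produced by Lemma \ref{lem:1.5}, and boundedness is used in Lemma \ref{lem:1.3} to normalize the admissible controls.
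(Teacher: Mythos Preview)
Your proposal is correct and matches the paper's own proof exactly: the paper simply cites Theorems \ref{thm:1.1} and \ref{thm:1.2} for continuity and Lemma \ref{lem:1.3} for the $L^\infty$ bound, which is precisely what you do (with a bit more explanatory detail).
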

\begin{proof}
See Theorems \ref{thm:1.1}, \ref{thm:1.2} and Lemma \ref{lem:1.3}.
\end{proof}

\section{The PDE for the value function}\label{sec:HJ}
This section is rather standard and follows well known arguments which require only minor modifications. However we include the proofs to make the paper self-contained and make it easily readable by people who are not experts in the dynamic programming approach and the theory of viscosity solutions. We first show the dynamic programming principle which is the key step in the proof
that the value function is a viscosity solution to the Hamilton--Jacobi equation. The proof of the dynamic programming principle below follows the proof of Theorem 1, \S 10.3 of \cite{Evans}.
\begin{lemma}[Dynamic Programming Principle]\label{lem:1.1}
For every $h>0$ such that $t_0 + h <T$, we have
\[
v(y_0,t_0) = \inf\left\{ \int_{t_0}^{t_0+h}\frac{1}{2}|\alpha(t)|^2\d t + v(x(t_0+h),t_0+h)\right\},
\]
where the infimum is taken over all measurable controls $\alpha$ and all solutions $x$ of \eqref{eq:1.3}.
\end{lemma}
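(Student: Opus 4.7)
My plan is to prove the two inequalities separately, as is standard for a dynamic programming argument, taking care throughout that the non-uniqueness of solutions to the state equation does not cause trouble. Denote the right-hand side of the claimed identity by $W(y_0,t_0)$.

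For the $\leq$ direction, I would start with an arbitrary measurable control $\alpha_1$ on $[t_0,t_0+h]$ and an arbitrary solution $x_1$ of $P(\alpha_1,y_0,t_0)$ on that interval, together with an arbitrary measurable $\alpha_2$ on $[t_0+h,T]$ and a solution $x_2$ of $P(\alpha_2,x_1(t_0+h),t_0+h)$. The concatenations
\[
\tilde\alpha(t)=\begin{cases}\alpha_1(t),&t\in[t_0,t_0+h),\\ \alpha_2(t),&t\in[t_0+h,T],\end{cases}\qquad \tilde x(t)=\begin{cases}x_1(t),&t\in[t_0,t_0+h],\\ x_2(t),&t\in[t_0+h,T],\end{cases}
\]
yield an admissible pair for $P(\tilde\alpha,y_0,t_0)$ since $x_1$ and $x_2$ match at $t_0+h$ and the integral equation $\tilde x(t)=y_0+\int_{t_0}^t\sqrt{E(\tilde x(s))}\tilde\alpha(s)\,\mathrm ds$ holds on $[t_0,T]$. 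From the definition \eqref{eq:1.4},
\[
v(y_0,t_0)\leq \tfrac12\!\int_{t_0}^{t_0+h}\!|\alpha_1|^2\,\mathrm dt+\tfrac12\!\int_{t_0+h}^{T}\!|\alpha_2|^2\,\mathrm dt+g(x_2(T)).
\]
Taking the infimum over the admissible $(\alpha_2,x_2)$ first collapses the last two terms to $v(x_1(t_0+h),t_0+h)$, and a second infimum over $(\alpha_1,x_1)$ gives $v(y_0,t_0)\leq W(y_0,t_0)$.

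For the reverse inequality $\geq$, I would invoke Lemma \ref{lem:1.3} to obtain a genuine minimizer $(\alpha^*,x^*)$ realizing $v(y_0,t_0)$ on $[t_0,T]$. Its restriction $(\alpha^*|_{[t_0+h,T]},x^*|_{[t_0+h,T]})$ is an admissible pair for $P(\alpha^*,x^*(t_0+h),t_0+h)$, so by the definition of the value function
\[
v(x^*(t_0+h),t_0+h)\leq \tfrac12\int_{t_0+h}^{T}|\alpha^*(t)|^2\,\mathrm dt+g(x^*(T)).
\]
Substituting this into $v(y_0,t_0)=\tfrac12\int_{t_0}^T|\alpha^*|^2\,\mathrm dt+g(x^*(T))$ yields
\[
v(y_0,t_0)\geq \tfrac12\int_{t_0}^{t_0+h}|\alpha^*(t)|^2\,\mathrm dt+v(x^*(t_0+h),t_0+h)\geq W(y_0,t_0).
\]

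The only genuinely delicate point in the usual DPP proof is the concatenation/restriction of solutions of the state equation; here this still goes through because solutions are defined through the integral equation and concatenating two solutions at a common interior value gives another solution, while restrictions of solutions are again solutions. The non-uniqueness is actually harmless for this argument since the infimum in \eqref{eq:1.4} and in $W(y_0,t_0)$ is taken over \emph{all} solutions associated to a given control. If I wanted to avoid relying on Lemma \ref{lem:1.3} one could instead pick an $\varepsilon$-minimizer and let $\varepsilon\to 0^+$, but the existence of a minimizer makes the argument cleaner and removes the need to pass to a limit.
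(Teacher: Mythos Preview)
Your proof is correct and follows essentially the same concatenation/restriction argument as the paper's own proof. The only cosmetic difference is that for the $\leq$ inequality you take an arbitrary tail pair $(\alpha_2,x_2)$ and pass to the infimum, whereas the paper invokes Lemma~\ref{lem:1.3} to pick an exact minimizer for $v(x_1(t_0+h),t_0+h)$ directly; both routes yield the same bound.
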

\begin{remark}\label{rem:1.1}
The infimum above does not change if we restrict ourselves to square integrable controls $\alpha$, cf. Remark \ref{rem:1.2}.
\end{remark}
\begin{proof}
Fix any square integrable control $\alpha_1$ and any solution $x_1$ of ($P(\alpha_1,y_0,t_0)$).
By Lemma \ref{lem:1.3}, there is some control $\alpha_2$ and a solution $x_2$ of ($P(\alpha_2,x_1(t_0 +h),t_0+h)$) such that
\begin{equation}\label{eq:1.5}
v(x_1(t_0+h),t_0+h)  = \int_{t_0+h}^T\frac{1}{2}|\alpha_2(t)|^2\d t + g(x_2(T)).
\end{equation}
We define the control $\alpha_3:[t_0,T]\to\R^N$ and the function $x_3:[t_0,T]\to \R^N$ as follows
\[
\begin{aligned}
\alpha_3(t) &:=
\begin{cases}
\alpha_1(t)&\text{for }t_0\leq t<t_0+h,\\
\alpha_2(t)&\text{for } t_0+h \leq t \leq T,
\end{cases}\\
x_3(t) &:=
\begin{cases}
x_1(t)&\text{for }t_0\leq t<t_0+h,\\
x_2(t)&\text{for } t_0+h \leq t \leq T.
\end{cases}
\end{aligned}
\]
It is straightforward to see that $x_3$ is absolutely continuous and solves ($P(\alpha_3,t_0,y_0)$).
By \eqref{eq:1.4}, we have
\[
\begin{aligned}
v(y_0,t_0)&\leq \int_{t_0}^T\frac{1}{2}|\alpha_3(t)|^2 \d t + g(x_3(T))\\
&=\int_{t_0}^{t_0+h}\frac{1}{2}|\alpha_1(t)|^2 \d t +\int_{t_0+h}^T\frac{1}{2}|\alpha_2(t)|^2\d t + g(x_2(T))\\
&= \int_{t_0}^{t_0+h}\frac{1}{2}|\alpha_1(t)|^2 \d t + v(x_1(t_0+h),t_0+h),
\end{aligned}
\]
where the last equality follows from \eqref{eq:1.5}.
Since the control $\alpha_1$ and the solution $x_1$ of ($P(\alpha_1,y_0,t_0)$) was arbitrary, we obtain
\[
v(y_0,t_0) \leq \inf\left\{ \int_{t_0}^{t_0+h}\frac{1}{2}|\alpha(t)|^2\d t + v(x(t_0+h),t_0+h) \right\},
\]
where the infimum is taken over all measurable controls $\alpha$ and all solutions $x$ of
($P(\alpha,y_0,t_0)$) (see Remark \ref{rem:1.1}).

We now prove the opposite inequality.
By Lemma \ref{lem:1.3}, there is a control $\alpha_4$ and a solution $x_4$ of ($P(\alpha_4,y_0,t_0)$) such that
\[
v(t_0, y_0)= \int_{t_0}^T\frac{1}{2}|\alpha_4(t)|^2\d t + g(x_4(T)).
\]
Note that the restriction $ x_4\vert_{[t_0+h,T]}$ is a solution of ($P(\alpha_4,x_4(t_0+h),t_0+h))$), so \eqref{eq:1.4} yields
\[
v(x_4(t_0+h),t_0+h) \leq \int_{t_0+h}^T\frac{1}{2}|\alpha_4(t)|^2\d t + g(x_4(T)).
\]
Gathering the above, we get
\[
\begin{aligned}
v(y_0,t_0) &= \int_{t_0}^T\frac{1}{2}|\alpha_4(t)|^2\d t + g(x_4(T))\\
&=\int_{t_0}^{t_0+h}\frac{1}{2}|\alpha_4(t)|^2\d t  + \int_{t_0+h}^T\frac{1}{2}|\alpha_4(t)|^2\d t + g(x_4(T))\\
&\geq \int_{t_0}^{t_0+h}\frac{1}{2}|\alpha_4(t)|^2\d t+ v(x_4(t_0+h),t_0+h).
\end{aligned}
\]
Hence, taking the infimum over all controls $\alpha$ and all solutions $x$ of ($P(\alpha,y_0,t_0)$) yields
\[
v(y_0,t_0) \geq \inf\left\{\int_{t_0}^{t_0+h}\frac{1}{2}|\alpha(t)|^2\d t+ v(x(t_0+h),t_0+h)\right\}.\qedhere
\]
\end{proof}

\begin{corollary}\label{cor:1.1}
For every $h>0$ such that $t_0 + h <T$, we have
\[
v(y_0,t_0) = \min\left\{ \int_{t_0}^{t_0+h}\frac{1}{2}|\alpha(t)|^2\d t + v(x(t_0+h),t_0+h)\right\},
\]
where the minimum is taken over all controls $\alpha$ with $\|\alpha\|_{L^2((t_0,T),\R^N)}\leq 2\sqrt{\|g\|_{L^\infty}}$ and all solutions $x$ of \emph{($P(\alpha,y_0,t_0)$)}.
\end{corollary}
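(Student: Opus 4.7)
The plan is to deduce Corollary \ref{cor:1.1} from Lemma \ref{lem:1.1} (the dynamic programming principle) together with the existence of an optimal control furnished by Lemma \ref{lem:1.3}. The key observation is that an optimal pair for the original Bolza problem on $[t_0,T]$ automatically attains the infimum in the DPP formula and already satisfies the required $L^2$-bound, so no separate minimization argument is needed.

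First, by Lemma \ref{lem:1.1}, for every admissible pair $(\alpha,x)$ we already have
\[
v(y_0,t_0) \leq \frac{1}{2}\int_{t_0}^{t_0+h}|\alpha(t)|^2\d t + v(x(t_0+h),t_0+h),
\]
which in particular gives the inequality ``$\leq$'' of the corollary when we restrict to controls bounded by $2\sqrt{\|g\|_{L^\infty}}$ in $L^2((t_0,T),\R^N)$.

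Next, I would invoke Lemma \ref{lem:1.3} to pick $\alpha_0 \in L^2((t_0,T),\R^N)$ with $\|\alpha_0\|_{L^2((t_0,T),\R^N)}\leq 2\sqrt{\|g\|_{L^\infty}}$ and a solution $x_0$ of $(P(\alpha_0,y_0,t_0))$ satisfying
\[
v(y_0,t_0) = \frac{1}{2}\int_{t_0}^{T}|\alpha_0(t)|^2\d t + g(x_0(T)).
\]
The restriction $x_0|_{[t_0+h,T]}$ is a solution of $(P(\alpha_0,x_0(t_0+h),t_0+h))$, so by the definition \eqref{eq:1.4} of $v$,
\[
v(x_0(t_0+h),t_0+h) \leq \frac{1}{2}\int_{t_0+h}^{T}|\alpha_0(t)|^2\d t + g(x_0(T)).
\]
Subtracting this from the previous identity (after splitting the integral at $t_0+h$) yields
\[
v(y_0,t_0) \geq \frac{1}{2}\int_{t_0}^{t_0+h}|\alpha_0(t)|^2\d t + v(x_0(t_0+h),t_0+h).
\]
Combined with the reverse inequality from Lemma \ref{lem:1.1}, this forces equality and shows that the pair $(\alpha_0,x_0)$ attains the minimum. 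Since $\|\alpha_0\|_{L^2((t_0,T),\R^N)}\leq 2\sqrt{\|g\|_{L^\infty}}$, the minimization set claimed in the corollary is nonempty and the infimum is realized on it.

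There is no serious obstacle here; the only point that requires a little care is the bookkeeping with the time intervals, namely observing that an optimal control on $[t_0,T]$ automatically restricts to an admissible control on the two subintervals $[t_0,t_0+h]$ and $[t_0+h,T]$, and that optimality on the whole interval together with the sub-optimality of $v(x_0(t_0+h),t_0+h)$ implies optimality at the intermediate time.
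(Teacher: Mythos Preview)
Your argument is correct and essentially the same as the paper's: both invoke Lemma \ref{lem:1.3} to obtain an optimal pair $(\alpha_0,x_0)$ with $\|\alpha_0\|_{L^2}\leq 2\sqrt{\|g\|_{L^\infty}}$, then combine the DPP inequality from Lemma \ref{lem:1.1} with the definition of $v$ applied to the restriction $x_0|_{[t_0+h,T]}$ to conclude that this pair attains the minimum. The only cosmetic difference is that the paper first deduces $v(x_0(t_0+h),t_0+h)=\frac{1}{2}\int_{t_0+h}^T|\alpha_0|^2\d s+g(x_0(T))$ and then substitutes back, whereas you subtract directly; the logic is identical.
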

\begin{proof}
By Lemma \ref{lem:1.3}, there is $\alpha_0\in B_{L^2((t_0,T),\R^N)}(2\sqrt{\left\|g\right\|_{L^\infty}})$ and
a solution $x_0$ of ($P(\alpha_0,y_0,t_0)$) such that
\begin{equation}\label{dpp1}
v(y_0,t_0) =\frac{1}{2}\int_{t_0}^T|\alpha_0(s)|^2 \d s + g(x_0(T)).
\end{equation}
Since $x_0$ is some solution of ($P(\alpha_0,y_0,t_0)$), Lemma \ref{lem:1.1} implies that
\begin{equation}\label{dpp2}
v(y_0,t_0)\leq \frac{1}{2}\int_{t_0}^{t_0+h}|\alpha_0(s)|^2\d s+v(x_0(t_0+h),t_0+h).
\end{equation}
Combining \eqref{dpp1} and \eqref{dpp2}, we obtain
\[
\frac{1}{2}\int_{t_0+h}^T|\alpha_0(s)|^2\d s + g(x_0(T))\leq v(x_0(t_0+h),t_0+h).
\]
On the other hand, $x_0|_{[t_0+h,T]}$ is a solution of ($P(\alpha_0,x_0(t_0+h),t_0+h)$), and so, by the definition,
\[
v(x_0(t_0+h),t_0+h)\leq \frac{1}{2}\int_{t_0+h}^T|\alpha_0(s)|^2\d s + g(x_0(T)),
\]
hence
\[
v(x_0(t_0+h),t_0+h)= \frac{1}{2}\int_{t_0+h}^T|\alpha_0(s)|^2\d s + g(x_0(T))
\]
and, in view of \eqref{dpp1}, the proof of the corollary is complete.
\end{proof}

We can now prove that the value function is a viscosity solution to the Hamilton--Jacobi equation, see Definition \ref{def:1}. The proof follows standard arguments
(see for instance \cite{BCD,Evans,FS})
with some adjustments to comply with our case.
\begin{theorem}
Let $v:\R^N\times[0,T]\rightarrow \R$ be a value function defined in \eqref{eq:1.4}. Then, in dimension $N=2$, $v$ is a viscosity solution to \eqref{eq:1.2}. In dimensions $N\geq 3$, $v$ is a lower semi-continuous discontinuous viscosity solution to \eqref{eq:1.2}.
\end{theorem}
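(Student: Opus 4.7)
The plan is to follow the standard dynamic programming argument (see, e.g., \S 10.3.2 of \cite{Evans}), combining the dynamic programming principle of Lemma \ref{lem:1.1} (with its minimizing refinement Corollary \ref{cor:1.1}) with the regularity already established: in dimension $N=2$ the continuity of $v$ from Theorem \ref{thm:1.3} lets me work directly with $v$, whereas for $N\geq 3$ only lower semi-continuity (Theorem \ref{thm:1.2}) is available and the subsolution property must be verified for $v^*$. The terminal condition $v(\cdot, T) = g$ is immediate from \eqref{eq:1.4}. For the subsolution property, fix $\f \in C^1$ such that $v - \f$ has a local maximum at $(y_0, t_0) \in \R^N \times (0, T)$ and, subtracting a constant, assume $v(y_0, t_0) = \f(y_0, t_0)$ and $v \leq \f$ nearby. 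Given $a \in \R^N$, use the constant control $\alpha \equiv a$ and any solution $x$ of ($P(a, y_0, t_0)$) in Lemma \ref{lem:1.1} to obtain, for all small $h>0$,
\begin{equation*}
\f(y_0, t_0) \leq \tfrac{h}{2}|a|^2 + \f(x(t_0+h), t_0+h).
\end{equation*}
Writing the increment as $\f(x(t_0+h), t_0+h) - \f(y_0, t_0) = \int_{t_0}^{t_0+h}[\f_s(x(s),s) + \sqrt{E(x(s))}a \cdot \nabla \f(x(s),s)]\, \d s$, dividing by $h$ and letting $h \to 0^+$ yields $\f_t(y_0, t_0) + \sqrt{E(y_0)}a \cdot \nabla \f(y_0, t_0) + \tfrac{1}{2}|a|^2 \geq 0$ for every $a \in \R^N$; minimizing over $a$ produces \eqref{eq:1.37}.

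For the supersolution property, suppose $v - \f$ has a local minimum at $(y_0, t_0)$ with $v(y_0, t_0) = \f(y_0, t_0)$ and $v \geq \f$ nearby. Corollary \ref{cor:1.1} provides, for each small $h$, a control $\alpha_h$ with $\|\alpha_h\|_{L^2} \leq 2\sqrt{\|g\|_{L^\infty}}$ and a trajectory $x_h$ attaining the DPP minimum, so that
\begin{equation*}
0 \geq \tfrac{1}{2}\int_{t_0}^{t_0+h}|\alpha_h(s)|^2\, \d s + \f(x_h(t_0+h), t_0+h) - \f(y_0, t_0).
\end{equation*}
Expressing the $\f$-difference as $\int_{t_0}^{t_0+h}[\f_s(x_h(s),s) + \sqrt{E(x_h(s))}\alpha_h(s) \cdot \nabla \f(x_h(s),s)]\, \d s$ and bounding the integrand from below by
\begin{equation*}
\f_s(x_h(s),s) + \min_{a \in \R^N}\left\{\sqrt{E(x_h(s))}a \cdot \nabla \f(x_h(s),s) + \tfrac{1}{2}|a|^2\right\},
\end{equation*}
estimate \eqref{eq:1.46} gives $|x_h(s) - y_0| \leq C_1\|\alpha_h\|_{L^2}\sqrt{s - t_0}$, so $x_h \to y_0$ uniformly on $[t_0, t_0+h]$ as $h \to 0^+$; dividing by $h$ and using continuity of $\sqrt{E}$ and $\nabla \f$ recovers \eqref{eq:1.38}.

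For dimensions $N \geq 3$, Theorem \ref{thm:1.2} yields $v_* = v$, so the supersolution argument above carries over verbatim, but the subsolution property of $v^*$ requires an extra limit passage: I would pick a sequence $(y_n, t_n) \to (y_0, t_0)$ with $v(y_n, t_n) \to v^*(y_0, t_0)$, apply the DPP at $(y_n, t_n)$ with constant control $a$ and some solution $x_n$ of ($P(a, y_n, t_n)$), and pass first in $n$ (via Lemma \ref{lem:1.5}, which supplies a uniformly convergent subsequence $\tilde{x}_n \to x_0$ with $x_0$ a solution of ($P(a, y_0, t_0)$)) and then in $h$. The main obstacle I anticipate is precisely here: non-uniqueness of trajectories through $(y_0, t_0)$ and the absence of a continuous-dependence estimate mean that the bound on $v(x_n(t_n+h), t_n+h)$ must be extracted purely from compactness via Lemma \ref{lem:1.5}, together with the sandwich $v \leq v^* \leq \f + \text{const}$ available from the local-maximum hypothesis, and one must also verify that the subsequential limit identifies with an admissible trajectory through $(y_0, t_0)$ driven by the constant control $a$.
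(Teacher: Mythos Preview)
Your proposal is correct and follows the same dynamic-programming route as the paper; the obstacle you anticipate in the $N\geq 3$ subsolution case dissolves exactly as you outline, since the sandwich $v\leq v^*\leq \f+v^*(y_0,t_0)-\f(y_0,t_0)$ near $(y_0,t_0)$, together with Lemma~\ref{lem:1.5}, lets you pass first $n\to\infty$ (along a subsequence with $\tilde x_n\to x_0$) and then $h\to 0^+$ without any further input. The paper streamlines slightly by treating $v^*$ uniformly in all dimensions and coupling the two limits via $h=1/n$ (choosing $(y_n,t_n)$ with $|v(y_n,t_n)-v^*(y_0,t_0)|<1/n^2$ so that the error term survives division by $h$, which also makes Lemma~\ref{lem:1.5} unnecessary here), and it proves the supersolution inequality by contradiction rather than directly, but these are cosmetic variations of the same argument.
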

\begin{proof}
The value function $v$ is lower semi-continuous and bounded in every dimension $N\geq 2$, according to Theorem \ref{thm:1.2} and Lemma \ref{lem:1.3}.
In dimension $N=2$, the value function $v$ is additionally continuous by Theorem \ref{thm:1.1}.

By the very definition \eqref{eq:1.4}, we get $v(y,T)= g(y)$, $y\in \R^N$.

Let $\f\in C^1(\R^N\times(0,T))$ be such that
\[
\text{$v^*-\f$ has a local maximum at $(y_0,t_0)\in \R^N\times (0,T)$,}
\]
that is
\begin{equation}\label{ass:1.1}
v^*(y,t)-\f (y,t)\leq v^*(y_0,t_0)-\f(y_0,t_0) \quad \mbox{for}\,\,|t-t_0|<\delta,|y-y_0|<\delta,
\end{equation}
for some $\delta>0$.
We claim that
\begin{equation}\label{eq:1.6}
\f_t(y_0,t_0) + \min_{a\in \R^N}\left\{ \sqrt{E(y_0)}a\cdot \nabla \f(y_0,t_0) + \frac{1}{2}|a|^2\right\} \geq  0.
\end{equation}

Take $(y_n,t_n)\to (y_0,t_0)$ such that $|v(y_n,t_n)-v^*(y_0,t_0)|<\frac{1}{n^2}$ and $|\f(y_n,t_n)-\f(y_0,t_0)|<\frac{1}{n^2}$.
Let $a_0\in\R^N$ be arbitrary. We consider the constant control $\alpha(t) = a_0$, $t_0\leq t \leq T$ and some solutions $x_n$ of the equations
\[
\begin{cases}
\dot{x}_n(t) = \sqrt{E(x_n(t))}a_0,&\text{for }t_0<t<T,\\
x_n(t_n) = y_n.
\end{cases}
\]
We take $n$ large enough so that $0<\frac{1}{n}<\delta$ and such that 
\begin{equation}\label{eq:nestxn}
|x_n(t)-y_0|\leq C_1|a_0|(t-t_n)+|y_n-y_0|<\delta\quad\mbox{for}\,\,t_0\leq t\leq t+\frac{1}{n}. 
\end{equation}
By Lemma \ref{lem:1.1}, since $x_n$ is a solution of ($P(\alpha,y_n,t_n)$),
\[
v(y_n,t_n) \leq \frac{1}{2}\int_{t_n}^{t_n+\frac{1}{n}}|a_0|^2\d t + v\left(x_n\left(t_n+\frac{1}{n}\right),t_n+\frac{1}{n}\right).
\]
Thus, it follows from \eqref{ass:1.1} that
\[
\begin{split}
-\frac{1}{2}|a_0|^2&\leq n\left[v\left(x_n\left(t_n+\frac{1}{n}\right),t_n+\frac{1}{n}\right) - v(y_n,t_n)\right]
\\
&\leq n\left[v^*\left(x_n\left(t_n+\frac{1}{n}\right),t_n+\frac{1}{n}\right) - v^*(y_0,t_0)\right]+\frac{1}{n}
\\
&
\leq n\left[\f\left(x_n\left(t_n+\frac{1}{n}\right),t_n+\frac{1}{n}\right) -\f(y_0,t_0)\right]+\frac{1}{n}
\\
&
\leq n\left[\f\left(x_n\left(t_n+\frac{1}{n}\right),t_n+\frac{1}{n}\right) -\f(y_n,t_n)\right]+\frac{2}{n}.
\end{split}
\]
Therefore, we have
\[
\begin{aligned}
&-\frac{1}{2}|a_0|^2\leq n\left[\f\left(x_n\left(t_n+\frac{1}{n}\right),t_n+\frac{1}{n}\right) -\f(y_n,t_n)\right]+\frac{2}{n}\\
&\quad=n\int_{t_n}^{t_n+\frac{1}{n}} \frac{d}{dt}\f(x_n(t),t)\d t = n\int_{t_n}^{t_n+\frac{1}{n}} \left[\f_t(x_n(t),t) + \nabla\f(x_n(t),t)\cdot \dot{x}_n(t)\right]\d t
+\frac{2}{n}\\
&\quad=n\int_{t_n}^{t_n+\frac{1}{n}}\left[ \f_t(x_n(t),t) + \sqrt{E(x_n(t))}a_0\cdot\nabla\f(x_n(t),t)\right]\d t+\frac{2}{n}.
\end{aligned}
\]
Letting $n\to\infty$ above and using \eqref{eq:nestxn}, we thus obtain
\[
\f_t(y_0,t_0) +\sqrt{E(y_0)}a_0\cdot \nabla \f(y_0,t_0) + \frac{1}{2}|a_0|^2 \geq  0
\]
which gives \eqref{eq:1.6} since $a_0$ was arbitrary.

We now assume that $v-\f$ has a local minimum at some point $(y_0,t_0)\in \R^N\times(0,T)$, where $\f\in C^1(\R^N\times(0,T))$.
We will show that
\begin{equation}\label{eq:1.33}
\f_t(y_0,t_0) + \min_{a\in \R^N}\left\{ \sqrt{E(y_0)}a\cdot \nabla \f (y_0,t_0) + \frac{1}{2}|a|^2 \right\}\leq 0.
\end{equation}
Suppose this is not the case.
Then, there is $\theta >0$ such that
\begin{equation}\label{eq:1.12}
\f_t(y_0,t_0) + \min_{a\in \R^N}\left\{ \sqrt{E(y_0)}a\cdot \nabla \f (y_0,t_0) + \frac{1}{2}|a|^2 \right\} > \theta.
\end{equation}
Recall that
\[
\min_{a\in \R^N}\left\{ \sqrt{E(y_0)}a\cdot \nabla \f (y_0,t_0) + \frac{1}{2}|a|^2 \right\} =-\frac{1}{2} \left |\sqrt{E(y_0)}\nabla \f(y_0,t_0)\right|^2,
\]
see \eqref{eq:1.11}.
The above equality shows that left-hand side of \eqref{eq:1.12} is continuous, therefore there is $\delta_1 > 0$ such that
\begin{equation}\label{eq:1.13}
\begin{aligned}
|y-y_0| + |t-t_0| < \delta_1 &\implies \f_t(y,t) + \min_{a\in \R^N}\left\{ \sqrt{E(y)}a\cdot \nabla \f (y,t) + \frac{1}{2}|a|^2 \right\} > \theta\\
&\iff \forall a \in \R^N \,\,\,\f_t(y,t) + \sqrt{E(y)}a\cdot \nabla \f (y,t) + \frac{1}{2}|a|^2  > \theta.
\end{aligned}
\end{equation}
By our assumption, there is $0<\delta_2 <\delta_1$ such that
\begin{equation}\label{eq:1.14}
|y-y_0| + |t-t_0| < \delta_2 \implies v( y,t) -v(y_0,t_0) \geq \f(y,t) - \f(y_0,t_0).
\end{equation}
There is $0<h<\delta_2/2$ such that for any control $\alpha\in B_{L^2((t_0,T),\R^N)}(2\sqrt{\|g\|_{L^\infty}})$ and any solution $x$ of
\begin{equation}\label{eq:1.15}
\begin{cases}
\dot{x}(t) = \sqrt{E(x(t))}\alpha(t),\quad t_0<t<T,\\
x(t_0)=y_0,
\end{cases}
\end{equation}
we have
\begin{equation}\label{eq:1.42}
t\in [t_0,t_0+h]\implies |x(t) - y_0| < \delta_2/2.
\end{equation}
Indeed, take some $\alpha\in B_{L^2((t_0,T),\R^N)}(2\sqrt{\|g\|_{L^\infty}})$ and some solution $x$ of \eqref{eq:1.15}.
Then, for $t\in[t_0,T]$,
\[
|x(t) - y_0| \leq \int_{t_0}^t|\sqrt{E(x(s))}\alpha(s)|\d s\leq C_1\int_{t_0}^t|\alpha(s)|\d s \leq 2C_1\sqrt{\|g\|_{L^\infty}}\sqrt{t-t_0},
\]
where $C_1$ comes from \eqref{eq:1.46}.
Choosing $h =\le  \dfrac{\delta_2}{4C_1\sqrt{\|g\|_{L^\infty}}}\pr^2$ yields \eqref{eq:1.42}.
By the above and \eqref{eq:1.14}, we obtain
\begin{equation}\label{bwaz}
\begin{aligned}
&v(x(t_0+h),t_0+h)- v(y_0,t_0)  \geq\f(x(t_0+h),t_0 +h) - \f(y_0,t_0) \\
&\quad=\int_{t_0}^{t_0+h} \frac{d}{dt}\f(x(t),t)\d t = \int_{t_0}^{t_0+h} \left[\f_t(x(t),t) + \nabla\f(x(t),t)\cdot \dot{x}(t)\right]\d t \\
&\quad=\int_{t_0}^{t_0+h} \left[\f_t(x(t),t) + \sqrt{E(x(t))}\alpha(t)\cdot\nabla\f(x(t),t)\right]\d t
\end{aligned}
\end{equation}
for any control $\alpha\in B_{L^2((t_0,T),\R^N)}(2\sqrt{\|g\|_{L^\infty}})$ and any solution $x$ of \eqref{eq:1.15}.
By Corollary \ref{cor:1.1}, there is $\alpha\in B_{L^2((t_0,T),\R^N)}(2\sqrt{\|g\|_{L^\infty}})$ and a solution $x$ of \eqref{eq:1.15} such that
\begin{equation}\label{bwaz1}
v(y_0,t_0) = \frac{1}{2}\int_{t_0}^{t_0+h}|\alpha (t)|^2 \d t + v(x(t_0+h),t_0 +h).
\end{equation}
Combining \eqref{bwaz} and \eqref{bwaz1} yields
\[
\int_{t_0}^{t_0+h} \left[\f_t(x(t),t) + \sqrt{E(x(t))}\alpha(t)\cdot\nabla\f(x(t),t) + \frac{1}{2}|\alpha (t)|^2 \right]\d t \leq 0.
\]
On the other hand, by \eqref{eq:1.13}, we get
\[
\int_{t_0}^{t_0+h} \left[\f_t(x(t),t) + \sqrt{E(x(t))}\alpha(t)\cdot\nabla\f(x(t),t) + \frac{1}{2}|\alpha (t)|^2\right] \d t > \theta h
\]
which is a contradiction.
Thus \eqref{eq:1.33} holds.
\end{proof}

\section{Short time uniqueness}\label{sec:uniq}
For the sake of generality, we forget about the particular form of the matrix $E(x)$  and consider the Hamilton--Jacobi initial value problem of the form
\begin{equation}\label{eq.uniq:1}
\begin{cases}
u_t(x,t) + \frac{1}{2} M(x) \nabla u(x,t) \cdot\nabla u(x,t)  = 0, &x\in \R^N,\; t\in(0,T),\\
u(x,0) = g(x),&x\in \R^N.
\end{cases}
\end{equation}
We make the following assumptions:
\begin{itemize}
\item for every $x\in \R^N$, $M(x)$ is an $N\times N$ real matrix and there exists $L>0$ such that
\begin{equation}\label{eq.uniq:2}
\|M(x) -M(y)\| \leq L|x-y|,\qquad \text{for all }x,\;y\in \R^N,
\end{equation}
\item $g:\R^N\to R$ is bounded and Lipschitz continuous, i.e., there is $C>0$ such that
\begin{equation}\label{eq.uniq:16}
|g(x) - g(y)| \leq C|x-y|,\qquad \text{for all }x,\; y\in \R^N,
\end{equation}
\end{itemize}

We now consider the initial value problem. The terminal value problem can be addressed analogously.
To avoid ambiguities  we formulate the definition of the viscosity solution of \eqref{eq.uniq:1}.
\begin{definition}\label{def:2}
We say that a bounded and upper semi-continuous function $v:\R^N\times[0,T)\to \R$ is a \emph{viscosity subsolution} of the initial value problem \eqref{eq.uniq:1} if $v(x,0) \leq g(x)$ for $x\in \R^N$
and for every $\f \in C^1(\R^N\times (0,T))$
\begin{equation}\label{eq:1.37a}
\begin{cases}
\text{ if $v-\f$ has a local maximum at $(y_0,t_0)\in \R^N\times (0,T)$, then}\\
\quad\f_t (y_0,t_0) + \frac{1}{2}M(y_0) \nabla \f(y_0,t_0)\cdot \nabla \f (y_0,t_0) \leq  0.
\end{cases}
\end{equation}
We say that a bounded and lower semi-continuous function $v:\R^N\times[0,T)\to \R$ is a \emph{viscosity supersolution} of the terminal value problem \eqref{eq.uniq:1} if $v(x,0) \geq g(x)$ for $x\in \R^N$
and for every $\f \in C^1(\R^N\times (0,T))$
\begin{equation}\label{eq:1.38a}
\begin{cases}
\text{ if $v-\f$ has a local minimum at $(y_0,t_0)\in \R^N\times (0,T)$, then}\\
\quad\f_t (y_0,t_0) + \frac{1}{2}M(y_0) \nabla \f(y_0,t_0)\cdot \nabla \f (y_0,t_0) \geq  0.
\end{cases}
\end{equation}
A function $v$ is a \emph{viscosity solution} of \eqref{eq:1.2} if it is a viscosity subsolution and a viscosity supersolution of \eqref{eq:1.2}.
\end{definition}

Let us recall the notion of sub- and superdifferential, see, e.g., \cite{BCD}.
Let $u:\R^N\times (0,T)\to \R$  and $(x,t)\in \R^N \times (0,T)$.
The \emph{superdifferential} $D^+ u(x,t)$ of $u$ at $(x,t)$ is defined as follows: for $(b,\xi)\in \R\times \R^N$,
\begin{gather*}
(b,\xi) \in D^+ u (x,t)\iff\\
 \limsup_{(y,s)\to (x,s)} \frac{u(y,s)-u(x,t) - b(s-t) - \xi \cdot (y-x)}{|y-x| +|s-t|}\leq 0.
\end{gather*}
The \emph{subdifferential} $D^- u (x,t)$ of $u$ at $(x,t)$ is given by
\[
D^- u(x,t) := - D^+ (-u)(x,t).
\]

\begin{remark}\label{rem:3}
We recall that if $u:\R^N\times (0,T)\to \R$ and $(x,t)\in \R^N\times (0,T)$
then $(b,\xi)\in D^+u(x,t)$ if and only if there exists $\f \in C^1(\R^N\times (0,T))$ such that $(b,\xi) = (\f_t (x,t), \nabla\f (x,t))$ and $u-\f$ has local maximum at $(x,t)$.
For the proof see e.g. Lemma 1.7, \S II in \cite{BCD} (it is assumed that $u$ is continuous there, but the inspection of the proof reveals that $u$ may be arbitrary function).
The similar result holds for the subdifferential $D^- u(x,t)$. In \cite{CIL} the second order sub- and superjets and their closures were introduced to deal with the second order PDEs. Thus, following \cite{CIL}, we introduce the closure of sub- and superdifferential (this concept is known in the field of non-smooth analysis under different names like ''limiting subdifferential'' or ''general subgradient'' in \cite{RW}).
The closure of the superdifferential $\overline{D}^+ u(x,t)$ of $u$ at $(x,t)$ is given by
\begin{gather*}
\overline{D}^+u(x,t) := \Big\{ (b,\xi)\in \R\times \R^N\mid \exists (x_n,t_n,b_n,\xi_n)\in \R^N\times (0,T)\times \R \times \R^N\\
\text{ such that }(b_n,\xi_n)\in D^+ u(x_n,t_n)\\
\text{ and } (x_n,t_n,u(x_n,t_n),b_n,\xi_n)\to (x,t,u(x,t),b,\xi)\Big\}.
\end{gather*}
The closure of the subdifferential $\overline{D}^- u (x,t)$ of $u$ at $(x,t)$ is defined in the similar manner.
Then, since $M$ is continuous, Definition \ref{def:2} may be equivalently stated as follows.

A bounded and upper semi-continuous function $v:\R^N\times[0,T)\to \R$ is a \emph{viscosity subsolution} of the initial value problem \eqref{eq.uniq:1} if $v(x,0) \leq g(x)$ for $x\in \R^N$
and for every $(x,t) \in R^N\times (0,T)$
\[
\text{ if $(b,\xi)\in \overline{D}^+ u(x,t)$, then } b +  \frac{1}{2}M(x)\xi \cdot \xi \leq  0.
\]
A bounded and lower semi-continuous function $v:\R^N\times[0,T)\to \R$ is a \emph{viscosity supersolution} of the initial value problem \eqref{eq.uniq:1} if $v(x,0) \geq g(x)$ for $x\in \R^N$
and for every $(x,t) \in R^N\times (0,T)$
\[
\text{ if $(b,\xi)\in \overline{D}^- u(x,t)$, then } b +  \frac{1}{2}M(x)\xi \cdot \xi \geq  0.
\]
\end{remark}

\begin{theorem}\label{thm.uniq:1}
Let $M:\R^N\to \R^{N\times N}$ be a matrix-valued map satisfying \eqref{eq.uniq:2} and let $g:\R^N\to \R$ satisfy \eqref{eq.uniq:16}.
Let $L,C$ be from \eqref{eq.uniq:2} and \eqref{eq.uniq:16}. We define
\[
T:= \frac{2}{LC}\;.
\]
If $u$ is a viscosity subsolution of \eqref{eq.uniq:1} and $v$ is a viscosity supersolution of \eqref{eq.uniq:1}, then $u\leq v$ in $\R^N\times [0,T)$.
If $u$ is a viscosity solution of \eqref{eq.uniq:1}, then for every $\tau\in(0,T), R>0$ there are $L_\tau >0, L_{\tau,R}>0$ such that
\begin{equation}\label{Ltau}
|u(x,t)- u(y,t)| \leq L_\tau |x-y|,\quad \text{for all }x,\,y \in \R^N,\;t\in [0,\tau],
\end{equation}
\begin{equation}\label{LtauR}
|u(x,t)- u(x,s)| \leq L_{\tau,R}|t-s|,\quad \text{for all }|x|<R,\;t,s \in [0,\tau].
\end{equation}
Moreover, if in addition $M$ is bounded, then for every $\tau\in(0,T)$, there is $L_\tau >0$ such that
\begin{equation}\label{Ltauxt}
|u(x,t)- u(y,s)| \leq L_\tau( |x-y|+|t-s|),\quad \text{for all }x,\,y \in \R^N,\;t\,,s\in [0,\tau].
\end{equation}
\end{theorem}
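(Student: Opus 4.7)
The proof decomposes into two parts: the comparison principle (which implies uniqueness) and the Lipschitz estimates \eqref{Ltau}--\eqref{Ltauxt}. The governing heuristic is the Riccati ODE $K'(t) = \tfrac{L}{2}K(t)^2$ with $K(0) = C$, whose explicit solution $K(t) = 2C/(2 - LCt)$ is finite precisely on $[0, T)$ with $T = 2/(LC)$; this is the natural time-dependent spatial Lipschitz constant for solutions of \eqref{eq.uniq:1} born from $C$-Lipschitz initial data, and it explains the threshold time.

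For the comparison principle I adapt the standard doubling-of-variables technique. Given $u$ a subsolution and $v$ a supersolution, fix $T_0 \in (0,T)$ and suppose $\sigma := \max_{\R^N \times [0, T_0]}(u-v) > 0$. Consider
\[
\Psi(x,y,t,s) := u(x,t) - v(y,s) - \frac{|x-y|^2}{2\varepsilon} - \frac{(t-s)^2}{2\eta} - \beta\bigl(\sqrt{1+|x|^2}+\sqrt{1+|y|^2}\bigr) - \gamma t;
\]
the $\beta$-term forces a maximum at an interior point $(\hat x, \hat y, \hat t, \hat s)$, with the usual bookkeeping yielding $|\hat x - \hat y|^2/\varepsilon \to 0$ as $\varepsilon \to 0$. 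Subtracting the viscosity sub/super inequalities leaves
\[
\gamma \leq \tfrac{1}{2}\bigl|(M(\hat x) - M(\hat y))\xi \cdot \xi\bigr| + o_\beta(1) \leq \tfrac{L}{2}|\hat x - \hat y|\,|\xi|^2 + o_\beta(1), \qquad \xi := (\hat x - \hat y)/\varepsilon.
\]
For quadratic Hamiltonians $|\xi|$ cannot be bounded by $\|u\|_\infty + \|v\|_\infty$ alone, and controlling it is the main obstacle. I resolve this by replacing $u, v$ with their sup-/inf-convolutions $u^\alpha, v_\alpha$, which are Lipschitz in $x$ with constants $K_\alpha$ and are viscosity sub-/super-solutions of mildly perturbed Hamilton--Jacobi problems; since $g$ is $C$-Lipschitz one has $u^\alpha(\cdot, 0) - v_\alpha(\cdot, 0) \leq C^2\alpha$. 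On these Lipschitz approximations $|\xi| \leq K_\alpha$ uniformly in $\varepsilon$, whence $|\hat x - \hat y| \leq K_\alpha\varepsilon \to 0$, the right-hand side above vanishes, and $\gamma \leq 0$ follows in the limit $\alpha \to 0^+$---a contradiction.

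For the spatial estimate \eqref{Ltau}, fix $h \in \R^N$ and compare the bounded viscosity solution $u(\cdot + h, \cdot)$ (which solves the HJ equation with matrix $M(\cdot + h)$) against $u(\cdot, \cdot) + K(t)|h|$. Repeating the doubling-of-variables argument, the only new contribution relative to the comparison proof is the Hamiltonian Lipschitz difference $|M(\hat x + h) - M(\hat y)| \leq L|h| + o(1)$, which yields a term $\tfrac{L|h|}{2}|\xi|^2$ that is absorbed by $K'(\hat t)|h| = \tfrac{L}{2}K(\hat t)^2|h|$ provided the bootstrap bound $|\xi| \leq K(\hat t)$ is maintained. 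The base case is $g(x+h) - g(x) \leq C|h| = K(0)|h|$; the bootstrap is closed by inf-convolution regularization combined with a continuous-in-$t$ argument on $[0, \tau]$, $\tau < T$, giving $L_\tau = 2C/(2 - LC\tau)$. The local temporal estimate \eqref{LtauR} is standard: since $u(\cdot, t)$ is $L_\tau$-Lipschitz on $[0, \tau]$, testing against $\phi^\pm(x,t) := u(x_0, t_0) \pm \tfrac{1}{2}\sup_{|x| \leq R+1}\|M(x)\|\,L_\tau^2(t - t_0) \pm c|x - x_0|^2$ in the viscosity inequalities gives $|u(x,t) - u(x,s)| \leq L_{\tau, R}|t - s|$ on $B_R \times [0, \tau]$. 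When $M$ is globally bounded, this is uniform in $x$ and combines with \eqref{Ltau} to produce \eqref{Ltauxt}.
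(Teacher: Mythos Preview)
Your comparison argument has a genuine gap at exactly the point you flag as ``the main obstacle.'' The sup/inf-convolutions $u^\alpha,v_\alpha$ are Lipschitz with some constant $K_\alpha$, but $K_\alpha\sim\sqrt{\|u\|_\infty/\alpha}\to\infty$ as $\alpha\to 0$. More importantly, $u^\alpha$ is a subsolution not of the original equation but of one with $M(x^*)$ in place of $M(x)$, where $x^*$ is the point realizing the sup and $|x-x^*|\le r_\alpha\sim\sqrt{\alpha}$. Passing back to $M(x)$ costs an error $\tfrac{L}{2}|x-x^*|\,|\xi|^2\lesssim L\,r_\alpha K_\alpha^2\sim L/\sqrt{\alpha}$. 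So after sending $\varepsilon\to 0$ for fixed $\alpha$ you are left with $\gamma\le Lr_\alpha K_\alpha^2 + o_\beta(1)$, which diverges as $\alpha\to 0$; the ``mildly perturbed'' problems are not mild at all for quadratic Hamiltonians, and no contradiction follows. The same obstruction undermines your bootstrap for the spatial Lipschitz bound, since it rests on the comparison principle you have not established.

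The paper's device sidesteps this entirely by abandoning the quadratic penalty $|x-y|^2/(2\varepsilon)$ in favour of
\[
\Psi(x,y,t)=u(x,t)-v(y,t)-\frac{\mu}{T-t}-\frac{CT}{T-t}\bigl(\gamma+|x-y|^2\bigr)^{1/2}-\delta e^{Kt}\bigl(\langle x\rangle+\langle y\rangle\bigr).
\]
The crucial point is that the spatial gradient of the square-root term has norm at most $\tfrac{CT}{T-t}$ regardless of $|x-y|$, so at a maximum $(\bar x,\bar y,\bar t)$ the ``momentum'' entering the viscosity inequalities is bounded a priori by $\tfrac{CT}{T-\bar t}$ with no regularization needed. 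Subtracting the two inequalities and using $\|M(\bar x)-M(\bar y)\|\le L|\bar x-\bar y|$ together with the identity $LCT=2$ shows that the Hamiltonian discrepancy is exactly dominated by the $t$-derivative of the penalty, yielding the contradiction $\mu/(T-\bar t)^2\le 0$. Moreover, this single argument proves directly
\[
u(x,t)-v(y,t)\le \frac{CT}{T-t}\,|x-y|,
\]
which gives comparison ($x=y$) and, taking $u=v$ a solution, the spatial Lipschitz bound \eqref{Ltau} with $L_\tau=CT/(T-\tau)$ in one stroke; no separate translated-comparison or Riccati bootstrap is required.

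Your treatment of the temporal estimate \eqref{LtauR} via sup/inf-convolutions and the a.e.\ bound $|u_t|\le\tfrac12\sup_{B_{R+1}}\|M\|\,L_\tau^2$ is correct and essentially matches the paper's Part~II.
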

\begin{proof}
We divide the proof into two parts.

\noindent \textbf{Part I: Comparison and Lipschitz continuity in the space variable.}

We will prove that for every $\gamma>0$ and all $(x,y,t)\in \R^{2N}\times (0,T)$
\begin{equation}\label{eq:gammaineq}
u(x,t) - v(y,t) \leq \frac{CT}{T-t}\le \gamma + |x - y|^2 \pr ^{1/2}.
\end{equation}
This will imply $u\leq v$ and in particular, if $u$ is a viscosity solution of \eqref{eq.uniq:1}, then
\[
|u(x,t)-u(y,t)| \leq \frac{CT}{T-t}|x-y|,\qquad\text{for all }t\in [0,T),\,x,\,y\in \R^N,
\]
which gives \eqref{Ltau}.

Suppose \eqref{eq:gammaineq} is not true. Then there are $\gamma_0,\sigma>0$ such that for $0<\gamma<\gamma_0$
\[
\sup_{(t,x,y)\in [0,T)\times\R^{2N}} \left\{u(x,t) - v(y,t) - \frac{CT}{T-t}\le \gamma + |x - y|^2 \pr ^{1/2}\right\}>\sigma.
\]
Let $\mu>0$ be such that
\begin{equation}\label{eq:psi}
\sup_{(t,x,y)\in [0,T)\times\R^{2N}} \left\{u(x,t) - v(y,t) -\frac{\mu}{T-t}- \frac{CT}{T-t}\le \gamma + |x - y|^2 \pr ^{1/2}\right\}>\sigma.
\end{equation}
For $\delta>0$, we consider the function $\Psi:\R^{2N}\times [0,T)\to \R$ given by
\[
\Psi(x,y,t) :=  u(x,t) - v(y,t) - \frac{\mu}{T-t} - \frac{CT}{T-t}\le \gamma + |x-y|^2\pr ^{1/2} - \delta e^{Kt}\le \langle x \rangle  + \langle y \rangle\pr,
\]
where $\langle x\rangle := (1 +2(\|M(0)\|^2+ L^2|x|^2))^{1/2}$, $x$, $y\in \R^N$, $t\in [0,T)$ and
\[
K:=\frac{LCT}{\sqrt{2}\mu}(\|u\|_\infty+\|v\|_\infty)+1.
\]
We notice that by \eqref{eq.uniq:2}, $\|M(x)\|\leq \langle x\rangle$.
It follows from \eqref{eq:psi} that there is 
$\delta_0>0$ such that for every $0<\delta<\delta_0$
\[
\sup_{(t,x,y)\in [0,T)\times\R^{2N}} \Psi(x,y,t)>0.
\]
Since $u$ and $v$ are bounded, 
\[
\begin{aligned}
\Psi(x,y,t)  \to -\infty,
\end{aligned}
\]
if $|x|\to \infty$ or $|y|\to \infty$ or $t\to T^-$.
Therefore $\sup_{\R^{2N}\times[0,T)}\Psi = \sup_{K\times [0,a]}\Psi$ for some compact $K\subset \R^{2N}$ and $a>0$.
Since $\Psi$ is upper semi-continuous, there exists $(\overline{x},\overline{y},\overline{t})\in \R^ {2N}\times [0,T)$ such that
\[
\Psi(\overline{x}, \overline{y}, \overline{t})  = \max_{\R^{2N}\times[0,T)}\Psi>0.
\]
We notice that $\overline{t} \not= 0$ as if $\overline{t} = 0$
then by the definition of a viscosity sub- and supersolution and by \eqref{eq.uniq:16},
\[
\begin{aligned}
0 &< \Psi(\overline{x},\overline{y},0) \leq g(\overline{x}) - g(\overline{y}) -\frac{\mu}{T} - C\le \gamma+|\overline{x} - \overline{y}|^2\pr ^{1/2} - \delta e^{K\overline t}\le \langle \overline{x}\rangle + \langle\overline{y}\rangle\pr \\
&\leq C|\overline{x} - \overline{y}|- C\le \gamma + |\overline{x}-\overline{y}|^2\pr ^{1/2} < 0
\end{aligned}
\]
which is a contradiction. We also note that \eqref{eq:psi} implies
\begin{equation}\label{eq:T}
\frac{1}{T-\overline t}< \frac{\|u\|_\infty+\|v\|_\infty}{\mu}.
\end{equation}

We now use Theorem 8.3 of \cite{CIL}(\footnote{
In Theorem 8.3 of \cite{CIL} the closures of the parabolic subjects $\overline{\mathcal{P}}^{1,+}u(x,t)$ and superjets 
$\overline{\mathcal{P}}^{1,-}u(x,t)$ are used. However it is easy to see that the projections onto the ``gradient'' components of
$\overline{\mathcal{P}}^{1,+}u(x,t)$ and 
$\overline{\mathcal{P}}^{1,-}u(x,t)$ are subsets of $\overline{D}^+ u(x,t)$ and $\overline{D}^- u(x,t)$ respectively. Of course the full force of
Theorem 8.3 of \cite{CIL} is not needed and the conclusions \eqref{eq.uniq:11a}-\eqref{eq.uniq:11} can be obtained by a standard doubling
of the time variable and penalization argument, however we cite Theorem 8.3 of \cite{CIL} here to shorten the proof since it is a convenient reference. We encourage the readers to use the above suggestion to show \eqref{eq.uniq:11a}-\eqref{eq.uniq:11} without referring to Theorem 8.3 of \cite{CIL}.
}) 
to obtain that there exist $b_1$, $b_2\in \R$ such that
\begin{gather}
\le b_1, \frac{CT}{T-\overline{t}}\frac{\overline{x}-\overline{y}}{\le \gamma + |\overline{x}-\overline{y}|^2\pr ^{1/2}} + 2\delta L^2 e^{K\overline t}\frac{\overline{x}}{\langle \overline{x}\rangle}\pr \in \overline{D}^+u(\overline{t},\overline{x}),\label{eq.uniq:11a}\\
\le - b_2, \frac{CT}{T-\overline{t}}\frac{\overline{x}-\overline{y}}{\le \gamma + |\overline{x}-\overline{y}|^2\pr ^{1/2}} - 2\delta L^2e^{K\overline t}\frac{\overline{y}}{\langle \overline{y}\rangle}\pr \in \overline{D}^-v(\overline{t},\overline{y}),\label{eq.uniq:11b}\\
b_1 + b_2 =  \frac{\mu}{(T-\overline{t})^2} + \frac{CT}{(T-\overline{t})^2}\le \gamma + |\overline{x}-\overline{y}|^2\pr ^{1/2}
+\delta Ke^{K\overline t}\le \langle \overline{x}\rangle + \langle\overline{y}\rangle\pr.\label{eq.uniq:11}
\end{gather}
By Remark \ref{rem:3} we now have
\[
\begin{aligned}
b_1 + \frac{1}{2}M(\overline{x})\le \frac{CT}{T-\overline{t}}\frac{\overline{x}-\overline{y}}{\le \gamma + |\overline{x}-\overline{y}|^2\pr ^{1/2}} + 2\delta L^2e^{K\overline t}\frac{\overline{x}}{\langle \overline{x}\rangle}\pr \\
\cdot\le \frac{CT}{T-\overline{t}}\frac{\overline{x}-\overline{y}}{\le \gamma + |\overline{x}-\overline{y}|^2\pr ^{1/2}} + 2\delta L^2e^{K\overline t}\frac{\overline{x}}{\langle \overline{x}\rangle} \pr \leq 0
\end{aligned}
\]
and consequently,
\begin{equation}\label{eq.uniq:8}
\begin{split}
b_1 &+ \frac{1}{2} \le \frac{CT}{T-\overline{t}}\pr^2 M(\overline{x})\le \frac{\overline{x}-\overline{y}}{\le \gamma + |\overline{x}-\overline{y}|^2\pr ^{1/2}} \pr \cdot \le \frac{\overline{x}-\overline{y}}{\le \gamma + |\overline{x}-\overline{y}|^2\pr ^{1/2}} \pr\\
&\leq - \delta L^2 e^{K\overline t}M(\overline{x})\le \frac{CT}{T-\overline{t}}\frac{\overline{x}-\overline{y}}{\le \gamma + |\overline{x}-\overline{y}|^2\pr ^{1/2}} \pr \cdot \frac{\overline{x}}{\langle \overline{x}\rangle}\\
&-\delta L^2e^{K\overline t}M(\overline{x})\frac{\overline{x}}{\langle \overline{x}\rangle}\cdot \le \frac{CT}{T-\overline{t}}\frac{\overline{x}-\overline{y}}{\le \gamma + |\overline{x}-\overline{y}|^2\pr ^{1/2}} \pr
-2\delta^2 L^4e^{2K\overline t}M(\overline{x})\frac{\overline{x}}{\langle \overline{x}\rangle}\cdot \frac{\overline{x}}{\langle \overline{x}\rangle}.
\end{split}
\end{equation}
In view of the inequalities
\[
\frac{|\overline{x}-\overline{y}|}{\le \gamma + |\overline{x}-\overline{y}|^2\pr ^{1/2}}\leq 1,\quad \frac{\sqrt{2}L|\overline{x}|}{\langle \overline{x}\rangle} \leq 1,
\]
$\|M(x)\|\leq \langle x\rangle$ and \eqref{eq:T}, we obtain from \eqref{eq.uniq:8},
\begin{equation}\label{eq.uniq:9}
\begin{split}
b_1 &+ \frac{1}{2} \le \frac{CT}{T-\overline{t}}\pr^2 M(\overline{x})\le \frac{\overline{x}-\overline{y}}{\le \gamma + |\overline{x}-\overline{y}|^2\pr ^{1/2}} \pr \cdot \le \frac{\overline{x}-\overline{y}}{\le \gamma + |\overline{x}-\overline{y}|^2\pr ^{1/2}} \pr
\\
&
\leq 
\delta e^{K\overline t}\langle \overline{x}\rangle\left(\frac{LCT}{\sqrt{2}\mu}(\|u\|_\infty+\|v\|_\infty)+\delta L^2 e^{KT}\right)\leq
\delta K e^{K\overline t}\langle \overline{x}\rangle
\end{split}
\end{equation}
if $\delta\leq 1/(L^2 e^{KT})$.

Using the definition of the viscosity supersolution and  Remark \ref{rem:3} we have
\[
\begin{aligned}
- b_2 +\frac{1}{2}M(\overline{y})\le  \frac{CT}{T-\overline{t}}\frac{\overline{x}-\overline{y}}{\le \gamma + |\overline{x}-\overline{y}|^2\pr ^{1/2}} - 2\delta L^2e^{K\overline t}\frac{\overline{y}}{\langle \overline{y}\rangle}\pr \\
\cdot \le  \frac{CT}{T-\overline{t}}\frac{\overline{x}-\overline{y}}{\le \gamma + |\overline{x}-\overline{y}|^2\pr ^{1/2}} - 2\delta L^2e^{K\overline t}\frac{\overline{y}}{\langle \overline{y}\rangle}\pr \geq 0.
\end{aligned}
\]
A similar argument as above yields
\begin{equation}\label{eq.uniq:10}
b_2 -\frac{1}{2}\le \frac{CT}{T-\overline{t}} \pr^2 M(\overline{y}) \le \frac{\overline{x}-\overline{y}}{\le \gamma + |\overline{x}-\overline{y}|^2\pr ^{1/2}}\pr \cdot \frac{\overline{x}-\overline{y}}{\le \gamma + |\overline{x}-\overline{y}|^2\pr ^{1/2}}\leq \delta K e^{K\overline t}\langle \overline{y}\rangle
\end{equation}
if $\delta\leq 1/(L^2 e^{KT})$.
We now add \eqref{eq.uniq:9} and \eqref{eq.uniq:10} and use \eqref{eq.uniq:11} to obtain
\begin{multline}\label{eq.uniq:13}
\frac{\mu}{(T-\overline{t})^2} + \frac{CT}{(T-\overline{t})^2}\le \gamma + |\overline{x}-\overline{y}|^2\pr ^{1/2} \\
+ \frac{1}{2}\le \frac{CT}{T-\overline{t}} \pr^2 \le M(\overline{x}) - M(\overline{y}) \pr \le \frac{\overline{x}-\overline{y}}{\le \gamma + |\overline{x}-\overline{y}|^2\pr ^{1/2}}\pr \cdot \frac{\overline{x}-\overline{y}}{\le \gamma + |\overline{x}-\overline{y}|^2\pr ^{1/2}}\leq 0.
\end{multline}
By \eqref{eq.uniq:2}
\[
\le M(\overline{x}) - M(\overline{y}) \pr \le \frac{\overline{x}-\overline{y}}{\le \gamma + |\overline{x}-\overline{y}|^2\pr ^{1/2}}\pr \cdot \frac{\overline{x}-\overline{y}}{\le \gamma + |\overline{x}-\overline{y}|^2\pr ^{1/2}}\geq -L|\overline{x} - \overline{y}|,
\]
so, after  including $CTL =2$, \eqref{eq.uniq:13} implies 
\[
\frac{\mu}{(T-\overline{t})^2} + \frac{CT}{(T-\overline{t})^2}\le \gamma + |\overline{x}-\overline{y}|^2\pr ^{1/2} - \frac{CT}{(T-\overline{t})^2}|\overline{x}-\overline{y}|\leq 0.
\]
Hence
\[
\frac{\mu}{T^2}\leq 0,
\]
which is contradiction.
Therefore \eqref{eq:gammaineq} must hold.

\noindent\textbf{Part II: Lipschitz continuity in the time variable.}

The proof of the Lipschitz continuity with respect to the time variable is standard but we will sketch it for completeness. Let $0<\tau<T, R>0$. We set
\[
L_{\tau,R}=\max_{|x|\leq R+2}\frac{1}{2}\|M(x)\|L_{\tau}^2,
\]
where $L_\tau$ is from \eqref{Ltau}. For $\varepsilon>0$ and $(x,t)\in \R^N\times(0,T)$ we define the sup-convolution of $u$
\[
u^\varepsilon(x,t)= \sup_{(y,s)\in \R^N\times(0,T)}\left\{u(y,s)-\frac{1}{\varepsilon}(|x-y|^2+(t-s)^2)\right\}.
\]
It is well known that the function $u^\varepsilon$ is semi-convex, Lipschitz continuous, $u^\varepsilon\to u$ pointwise as $\varepsilon\to 0$ and it is easy to see that it satisfies
estimate \eqref{Ltau}. Moreover, see e.g. \cite{JLS} or Lemma A.3 of \cite{CCKS}, if $a_\varepsilon:=(3\varepsilon\|u\|_\infty)^{\frac{1}{2}}<1$ then $u^\varepsilon$
satisfies a.e. in $B(0,R+1)\times(a_\varepsilon,\tau-a_\varepsilon)$
\begin{equation}\label{eq:ueps}
(u^\varepsilon)_t(x,t)\leq L_{\tau,R}.
\end{equation}
If we now take $\eta_\delta, \delta>0$ to be standard mollifiers in $\R^{N+1}$ (see e.g. \cite{Evans}) and define $u_{\varepsilon}^{\delta}:=u^{\varepsilon}*\eta_\delta$ then, taking convolution of both sides of \eqref{eq:ueps} with $\eta_\delta$ and using $(u^{\varepsilon}_{\delta})_t=(u^{\varepsilon})_t*\eta_\delta$,
we get for sufficiently small $\delta$ that, for every $(x,t)\in B(0,R)\times(2a_\varepsilon,\tau-2a_\varepsilon)$,
\[
(u^{\varepsilon}_{\delta})_t(x,t)\leq L_{\tau,R}.
\]
The above implies that for every $x\in B(0,R), 2a_\varepsilon<t\leq s<\tau-2a_\varepsilon$,
\[
u^{\varepsilon}_{\delta}(x,s)-u^{\varepsilon}_{\delta}(x,t)\leq L_{\tau,R}(s-t)
\]
and, by letting $\delta\to 0$ and then $\varepsilon\to 0$, we obtain that for every $x\in B(0,R), 0<t\leq s<\tau$
\[
u(s,x)-u(t,x)\leq L_{\tau,R}(s-t).
\]
The inequality 
\[
u(s,x)-u(t,x)\geq -L_{\tau,R}(s-t)
\]
is obtained similarly by considering the the inf-convolutions of $u$
\[
u_\varepsilon(x,t)= \inf_{(y,s)\in \R^N\times(0,T)}\left\{u(y,s)+\frac{1}{\varepsilon}(|x-y|^2+(t-s)^2)\right\}.
\]
This proves \eqref{LtauR}.

If $M$ is bounded, the estimate \eqref{Ltauxt} follows since the constants $L_{\tau,R}$ do not depend on $R$.
\end{proof}

We finish with an example which illustrates two phenomena. On the one hand it shows that the time of preservation of Lipschitz continuity in Theorem \ref{thm.uniq:1} is optimal. On the other hand, it shows that one cannot expect higher regularity of solutions to the degenerate Hamilton--Jacobi equation of the form \eqref{eq:1.1} than ${1/2}$-H\"{o}lder continuity. This is an essential information. As mentioned in the Introduction, one would obtain uniqueness of viscosity solutions to \eqref{eq:1.1} for any time $T>0$, if such solutions were $C^{1/2+\epsilon}$ regular in the spacial variable. A rather standard modification of the doubling variables method gives such a claim. The example below thus shows that we face a much more subtle problem when dealing with uniqueness.

We consider a one-dimensional version of \eqref{eq.uniq:1} 
\begin{equation}\label{eq:uniq.1.1}
\begin{cases}
u_t(x,t) + \frac{1}{2} |x| |u_x(x,t)|^2  = 0, &x\in \R,\; t\in(0,T),\\
u(x,0) = g(x), &x\in \R,
\end{cases}
\end{equation}
with a particular choice of
\begin{equation}\label{eq:g1}
g(x)=\begin{cases}
-1, &\text{if }x\leq -1,\\
x, &\text{if }-1<x<1,\\
1, &\text{if }x\geq 1.
\end{cases}
\end{equation}
Notice that $g$ as well as $M(x)=|x|$ satisfy \eqref{eq.uniq:16} with $C=1$ and \eqref{eq.uniq:2} with $L=1$.

\begin{proposition}\label{ostatnia}
There exists a viscosity solution $u$ to \eqref{eq:uniq.1.1}-\eqref{eq:g1} such that $u(\cdot, t)$ is Lipschitz continuous for $0\leq t<2$ and $u(\cdot, t)$ is only $1/2$-H\"{o}lder continuous for $t>2$.
\end{proposition}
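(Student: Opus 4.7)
The plan is to construct $u$ by means of a nonlinear change of spatial variable that converts the degenerate equation \eqref{eq:uniq.1.1} into a classical Hamilton--Jacobi equation, to which the Hopf--Lax formula applies. Define the homeomorphism $\Phi:\R\to\R$ by $\Phi(x) := 2\,\mathrm{sgn}(x)\sqrt{|x|}$, with inverse $\Phi^{-1}(\zeta) = \mathrm{sgn}(\zeta)\,\zeta^2/4$. Away from $x=0$, $\Phi$ is $C^{\infty}$ and satisfies the identity $[(\Phi^{-1})'(\zeta)]^2 = |\Phi^{-1}(\zeta)|$; consequently, if $U(\zeta,t) := u(\Phi^{-1}(\zeta),t)$, then formally $U_t + \frac{1}{2}U_\zeta^2 = u_t + \frac{1}{2}|x|u_x^2$, and the problem \eqref{eq:uniq.1.1}--\eqref{eq:g1} transforms into $U_t + \frac{1}{2}U_\zeta^2 = 0$ with bounded Lipschitz initial data $G(\zeta) := g(\Phi^{-1}(\zeta)) = \mathrm{sgn}(\zeta)\min\{\zeta^2/4,\,1\}$.

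I would then take $u(x,t) := U(\Phi(x),t)$ where $U$ is given by the standard Hopf--Lax formula
\[
U(\zeta,t) = \inf_{\eta\in\R}\Bigl\{G(\eta) + \frac{(\zeta-\eta)^2}{2t}\Bigr\},\qquad t>0,
\]
with $U(\cdot,0):=G$, which is the unique continuous viscosity solution of the transformed problem. The verification that $u$ is a viscosity solution of \eqref{eq:uniq.1.1} splits into two parts. At a point $(x_0,t_0)$ with $x_0\neq 0$, $\Phi$ is locally a diffeomorphism, so any $C^1$ test function $\phi$ touching $u$ pulls back via $\tilde\phi(\zeta,t) := \phi(\Phi^{-1}(\zeta),t)$ to a $C^1$ test function for $U$, and the chain-rule identity $\tilde\phi_\zeta^2 = |x|\phi_x^2$ converts the viscosity inequality for $U$ into the required one for $u$. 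At the degenerate point $x_0=0$, I would use the explicit description of $U$ near $\zeta=0$ derived below: for each $t_0>0$, either $u(\cdot,t_0)$ has a finite two-sided kink at $0$ (when $t_0<2$), in which case direct computation of sub/super-differentials shows that the supersolution condition is vacuous while the subsolution condition reduces to $\phi_t(0,t_0)\leq 0$ (which holds because $u(0,\cdot)\equiv 0$ on $[0,2]$), or $u(\cdot,t_0)$ exhibits one-sided $\mathrm{sgn}(x)\sqrt{|x|}$ behavior (when $t_0>2$), in which case no $C^1$ function can touch $u$ from either side at $(0,t_0)$, so both viscosity conditions hold vacuously.

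The regularity claims follow from an explicit evaluation of the Hopf--Lax minimum for $\zeta$ near $0$, using the piecewise-quadratic form of $G$. For $t<2$ the minimizer is the interior critical point of $\eta\mapsto G(\eta) + (\zeta-\eta)^2/(2t)$, giving
\[
U(\zeta,t) = \frac{\zeta^2}{2(t+2)}\quad(\zeta\geq 0),\qquad U(\zeta,t) = -\frac{\zeta^2}{2(2-t)}\quad(\zeta\leq 0).
\]
Since $\zeta^2 = 4|x|$ at $\zeta=\Phi(x)$, the pullback is $u(x,t)=2x/(t+2)$ for $x\geq 0$ small and $u(x,t)=2x/(2-t)$ for $x\leq 0$ small, which is piecewise linear near $0$; combined with the Lipschitz regularity of $U(\cdot,t)$ and boundedness of $\Phi'$ on $\R\setminus(-\delta,\delta)$, this gives global Lipschitz continuity of $u(\cdot,t)$ on $\R$ with constant at most $2/(2-t)$. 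For $t>2$, the minimizer jumps to $\eta^*=-2$ (because $G(-2)=-1$ outweighs the added quadratic cost), so $U(\zeta,t) = -1 + (\zeta+2)^2/(2t)$ near $\zeta=0$; pulling back,
\[
u(x,t) - u(0,t) = \frac{4\,\mathrm{sgn}(x)\sqrt{|x|}}{t} + \frac{2|x|}{t},
\]
which is exactly $1/2$-H\"older, but not Lipschitz, in any neighborhood of $x=0$.

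The hard part is the verification of the viscosity conditions at the degenerate point $x=0$: both $\Phi$ and the Hamiltonian fail to be smooth there, so the chain-rule argument breaks down, and one must directly analyze the sub- and super-differentials of $u$ at $(0,t_0)$ for each $t_0\in(0,T]$. The explicit near-zero formulas for $U$ from the regularity step are precisely what show that, in all three regimes $t_0<2$, $t_0=2$, $t_0>2$, these (semi)differentials are empty or reduce to a trivially verified sign condition on $\phi_t$; carrying this out case-by-case is where the technical work of the proof is concentrated.
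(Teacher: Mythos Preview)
Your approach is essentially the same as the paper's: the same change of variable $\Phi=A$ (the paper writes $A(x)=2\,\mathrm{sign}(x)\sqrt{|x|}$), the same reduction to the eikonal equation solved by the Hopf--Lax formula, and the same explicit evaluation of the minimizer to read off the near-origin formulas governing the regularity. The only difference is that the paper delegates the verification that $u(x,t)=U(\Phi(x),t)$ is a viscosity solution of \eqref{eq:uniq.1.1} (including at $x=0$) to the reference \cite{CW}, whereas you sketch that verification directly via the sub-/super\-differential case analysis.
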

\begin{proof}
We use the results of \cite{CW} where a viscosity solution to \eqref{eq:uniq.1.1} is constructed. The viscosity solution constructed in \cite{CW} has the 
form
\begin{equation}\label{cw}
u(x,t)=v(A(x),t),
\end{equation}
where $A(x)=2\mathrm{sign}(x)\sqrt{|x|}$ and $v$ solves $v_t+\frac{1}{2}|v_x(x,t)|^2  = 0$. For $v$ we have a representation
given by the Hopf--Lax formula,
\begin{equation}\label{HL}
v(x,t)=\min_{y\in\R}\left(v_0(y)+\frac{|x-y|^2}{2t}\right).
\end{equation}
Observe that $v_0(y)=g(A^{-1}(y))$, where $A^{-1}(y) = \frac{1}{4}\mathrm{sign}(y)y^2$, so
\begin{equation}\label{eq:g2}
v_0(y)=\begin{cases}
-1,&\text{if } y\leq -2,\\
\frac{1}{4}\mathrm{sign}(y)y^2, &\text{if }-2<y<2,\\
1, &\text{if }y\geq 2.
\end{cases}
\end{equation}
Plugging $v_0(y)$ in \eqref{HL}, we are in a position to find $v$. 

First, we restrict our attention to the region $x\in \R$, $0\leq t < 2$.
After considering a number of cases, we arrive at 
\begin{equation}\label{eq:last}
v(x,t)  = 
\begin{cases}
-1, & \text{if }x \leq -2, \, 0\leq t < 2,\\
\dfrac{(x+2)^2}{2t}-1, &\text{if }-2<x\leq 0,\, 0\leq t< 2,\, t\geq x+2,\\[6pt]
\dfrac{x^2}{2(t-2)},&\text{if }-2<x \leq 0,\, 0\leq t< 2,\, t< x+2,\\[6pt]
\dfrac{x^2}{2(t+2)},&\text{if } x >0, \,0\leq t < 2,\, t \geq \frac{1}{2}x^2 -2,\\
1,&\text{if }, x >0, \,0\leq t < 2,\, t < \frac{1}{2}x^2 -2.
\end{cases}
\end{equation}
The following picture illustrates it.
\begin{center}
\psset{algebraic,unit=1.5cm}
\begin{pspicture}[showgrid=salse](-4,-1)(4,4)
\psline[linecolor = gray, linewidth = 1.3pt]{->}(-4,0)(4,0)
\uput[270](4,0){\color{gray}{$x$}}
\psline[linecolor = gray, linewidth = 1.3pt]{->}(0,-1)(0,4)
\uput[0](0,4){\color{gray}{$t$}}
\psline[linestyle = dashed,linewidth = 1.3pt](-4,2)(4,2)
\uput[45](-4,2){$t = 2$}
\psplot[linecolor = blue,linewidth = 1.3pt]{1.4}{3.5}{0.5*x*x-2}
\rput[l]{72}(2.93,3){\color{blue}{$t=\frac{1}{2}x^2-2$}}
\psline[linestyle = dashed, linewidth = 1.3pt](-2,-1)(-2,4)
\rput[u]{90}(-2.2,3.6){$x= -2$}
\psline[linecolor = brown, linewidth = 1.3pt](-3,-1)(1.3,3.3)
\rput[l]{45}(0.4,2.6){\color{brown}{$t=x+2$}}
\uput[0](-3.5,1){\large\color{red}{$v = -1$}}
\rput[l]{45}(-1.9,0.7){\large\color{red}{$v = \frac{(x+2)^2}{2t}-1$}}
\rput[l]{0}(-1.25,0.6){\large\color{red}{$v = \frac{x^2}{2(t-2)}$}}
\rput[l]{0}(0.6,1){\large\color{red}{$v = \frac{x^2}{2(t+2)}$}}
\rput[l]{0}(3,1){\large\color{red}{$v = 1$}}
\end{pspicture}
\end{center}

In view of \eqref{eq:last}, \eqref{cw} and the definition of $A$, we clearly see that $u$ is Lipschitz continuous for $0\leq t<2$ and $x\in \R$.

Next, using \eqref{HL}, we notice that for $x\in(-2,0]$ and $t>2$,
\[
v(x,t)=\frac{(x+2)^2}{2t}-1.
\] 
Hence and by \eqref{cw}, for $x\in(-1,0]$ and $t>2$,
\[
u(x,t)=\frac{(A(x)+2)^2}{2t}-1 = \frac{-2x - 4\sqrt{-x} +2}{t}-1.
\]
This means that $u$ ceases to be Lipschitz continuous
for $t>2$ and it is exactly $1/2$-H\"{o}lder continuous there.
\end{proof}

{\bf Acknowledgements.} T.C. was supported by the National Science Center of Poland grant SONATA BIS 7 number UMO-2017/26/E/ST1/00989.
This work was partially supported by the grant 346300 for IMPAN from the Simons Foundation and the matching 2015-2019 Polish MNiSW fund. T.C and J.S. are grateful to S\l{}awomir Plaskacz for helpful discussions.

\end{document}